\newtheorem{theorem}{Theorem}[section]
\newtheorem{lemma}[theorem]{Lemma}
\newtheorem{prop}[theorem]{Proposition}
\newtheorem*{theoremA}{Theorem A}
\newtheorem*{theoremB}{Theorem B}
\newtheorem*{theorem*}{Theorem}
\newtheorem*{corollary}{Corollary 1}
\theoremstyle{definition} 
\newtheorem{definition}{Definition}[section]
\theoremstyle{remark}
\newcommand{\mr}{\mathrm}
\newcommand{\wt}{\widetilde}
\newcommand{\es}{\stackrel{e}{\sim}}
\newcommand{\esc}{\stackrel{\bar e}{\sim}}
\newcommand{\tw}{\widetilde w}
\newcommand{\tx}{\widetilde x}
\newcommand{\tB}{\widetilde B}
\newcommand{\tS}{\widetilde S}
\newcommand{\tX}{\widetilde X}
\newcommand{\tz}{\widetilde z}
\newcommand{\tu}{\widetilde u}
\newcommand{\ty}{\widetilde y}
\newcommand{\ain}{\alpha_{in}}
\newcommand{\aou}{\alpha_{out}}
\newcommand{\ovv}{\overline v_0}
\newcommand{\ra}{\rangle}
\newcommand{\la}{\langle}
\begin{document}

\title[Combinatorial negative curvature and three-manifolds]{Combinatorial negative curvature and triangulations of three-manifolds}

\author{Damian Osajda}
\address{Instytut Matematyczny,
Uniwersytet Wroc\l awski\\
pl.\ Grunwaldzki 2/4,
50--384 Wroc{\l}aw, Poland}
\address{Universit\"at Wien, Fakult\"at f\"ur Mathematik\\
Oskar-Morgenstern-Platz 1, 1090 Wien, Austria.
}
\email{dosaj@math.uni.wroc.pl}
\subjclass[2010]{{20F67; 57M50}} \keywords{combinatorial nonpositive curvature, three manifold, hyperbolic group}

\date{\today}

\begin{abstract}
We introduce and study local combinatorial conditions on a simplicial complex, implying Gromov hyperbolicity of its universal cover.
We apply the theory to Thurston's problem on $5/6^*$--triangulations of $3$--manifolds, providing a new proof and generalizing
the original result. We indicate further applications.
\end{abstract}

\maketitle

\section{Introduction}
\label{s:intro}
A \emph{$5/6^*$--triangulation} of a $3$--manifold is a triangulation in which every edge has \emph{degree} $5$ or $6$ (i.e.\ there are $5$ or $6$ tetrahedra around each edge), and every triangle contains only one edge of degree $5$.
Thurston conjectured that every closed $3$--manifold that admits a $5/6^*$--triangulation has Gromov hyperbolic
fundamental group. Elder, McCammond and Meier \cite{EMcCM} established this statement, via a computer-assisted proof.
\medskip

In this paper we place the question on hyperbolic triangulations of $3$--manifolds into the frame of a combinatorial nonpositive
curvature. We introduce a new local combinatorial condition of \emph{$m$--location} (where $m=7,8,9,\ldots$) for flag simplicial complexes. 
When $m \geqslant 8$ this condition is an analogue of the negative curvature, as the following main result of the paper 
shows.
\begin{theoremA}
Let $X$ be a simply connected $8$--located locally $5$--large simplicial complex. Then the $1$--skeleton of $X$, equipped with
the standard path metric, is Gromov hyperbolic.
\end{theoremA}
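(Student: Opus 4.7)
My plan is to deduce Gromov hyperbolicity of $X^{(1)}$ from a linear combinatorial isoperimetric inequality, invoking the classical fact (Bridson--Haefliger, Gromov) that a simply connected geodesic space satisfying such an inequality is hyperbolic. Since $X$ is simply connected, any edge-loop $\gamma \subset X^{(1)}$ bounds a simplicial disk diagram $D\to X$; I would fix $\gamma$ and pick $D$ of minimal combinatorial area.

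The core step is to bound from below the degree of each interior vertex of a minimal $D$. Local $5$-largeness of $X$ pulls back to $D$ and forbids full cycles of length $\leq 4$ in the link of any interior vertex. I would then use $8$-location to rule out full cycles of length $5$, $6$ and $7$ in such links: the intuition is that each such short configuration in $\mathrm{lk}(v,D)$ should, by the $8$-location hypothesis, admit a filling inside a small combinatorial ball of $X$ that can be substituted into $D$ to strictly reduce its area. The target outcome is a uniform lower bound of $7$ on the degree of each interior vertex of $D$, together with analogous control at vertices on $\partial D\setminus\gamma$ that are not corners of $\gamma$.

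A combinatorial Gauss--Bonnet computation then closes the argument. Assigning the corner angle $\pi/3$ at each corner of each $2$-simplex of $D$, interior vertices of degree $\geq 7$ contribute strictly negative curvature; the total curvature equals $2\pi\chi(D)=2\pi$, while the boundary defect is bounded linearly by $|\gamma|$. Balancing the two gives $\mathrm{Area}(D) \leq C\,|\gamma|$ for a universal constant $C$, which is the desired isoperimetric inequality, and hyperbolicity follows.

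The step I expect to be hardest is the local reduction lemma underlying the degree bound. One needs a genuinely combinatorial surgery that, starting from a short full cycle in the link of an interior vertex of a minimal $D$, produces a strictly smaller filling via a homotopy supported in a small ball of $X$. This is where the precise numerology $m=8$ (rather than $m=7$) and its interplay with local $5$-largeness must enter critically: $5$-largeness handles cycles of length $\leq 4$, while $8$-location must provide the area-reducing replacement at lengths $5$, $6$ and $7$. I would expect that verifying the lemma requires a careful case analysis of the combinatorial balls prescribed by the definition of $8$-location, and that the threshold $m=8$ is exactly what makes the surgery area-reducing rather than area-preserving.
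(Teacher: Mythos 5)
Your approach is genuinely different from the paper's. The paper does not go through a linear isoperimetric inequality or Gauss--Bonnet on disk diagrams. Instead it introduces a global descent condition $SD'$ (a triangle condition and a vertex condition on the spheres $S_i(O)$), proves in Theorem~\ref{t:hyp} that an $8$--located, locally $5$--large complex satisfying $SD'$ has uniformly $2$--thin intervals and hence is $\delta$--hyperbolic via Papasoglu's criterion, and then proves a local-to-global Theorem~\ref{t:logl}: the universal cover of an $8$--located locally $5$--large complex is built inductively as a union of combinatorial balls and shown to satisfy $SD'$. Theorem A follows since a simply connected complex is its own universal cover.

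There is a concrete gap in your plan at exactly the point you identify as the hardest. You want $8$--location to rule out a full cycle of length $5$, $6$ or $7$ as the link of an interior vertex of a minimal disk diagram. But $8$--location in this paper is a condition on \emph{dwheels}: a union of two wheels $(v_0;\ldots,w,v_0')$ and $(v_0';\ldots,w,v_0)$ glued in a prescribed way, with the boundary of the union short. A single interior vertex of degree $6$ in $D$ maps to a single $6$--wheel in $X$, which is not a dwheel and therefore does not, by itself, trigger the $8$--location hypothesis. To invoke $8$--location you need an \emph{adjacent} vertex whose wheel, together with the first, forms a dwheel of boundary length at most $8$; the constraint $k+l\leqslant 12$ (resp.\ $\leqslant 11$) then forces both degrees to be small. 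A lone degree-$6$ interior vertex surrounded by degree-$\geqslant 7$ neighbours produces no short dwheel at all, so your ``area-reducing surgery'' has nothing to act on, and the claimed uniform lower bound of $7$ on interior vertex degrees is not a consequence of the hypothesis in the form you sketch. (Indeed, $8$--located complexes do contain $6$--wheels --- e.g.\ $5/6^*$--triangulations of $3$--manifolds have plenty --- so one cannot hope to exclude them from links outright; any isoperimetric argument would have to be subtler, e.g.\ averaging curvature or tracking dwheels, rather than giving a pointwise degree bound.)

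One further mismatch worth flagging: the paper, in Section~\ref{s:new8loc}, formulates a \emph{second} notion of $m$--location phrased directly in terms of small filling diagrams for short loops, which is much closer in spirit to what your Gauss--Bonnet scheme wants. But the paper explicitly notes that this alternative is \emph{incomparable} to the dwheel version appearing in the hypothesis of Theorem A. So even if a disk-diagram / Dehn-function argument can be made to work (the paper suggests it is plausible as a ``general scheme''), it is not a drop-in replacement: the combinatorial input from the dwheel version of $8$--location would have to be converted into filling information first, and that conversion is precisely what your proposal leaves unaddressed.
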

There are many $8$--located simplicial complexes and groups acting on them geometrically resulting from various notions of combinatorial
nonpositive curvature appearing in the literature (see Section~\ref{s:fin} for a discussion). 
However, with the notion of $m$--location we enter for the first time the world of 
triangulations of manifolds in dimensions above $2$.
\begin{theoremB}
Every $5/6^*$--triangulation of a $3$--manifold is an $8$--located locally $5$--large simplicial complex.
\end{theoremB}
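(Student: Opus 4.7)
My plan is to verify the two local conditions — locally $5$-large and $8$-located — separately, since each reduces to statements about links of simplices of $X$. In any triangulation of a $3$-manifold the link of a vertex is a triangulated $2$-sphere and the link of an edge is a cycle, and the $5/6^*$ hypothesis makes both extremely rigid: an edge link is a $5$- or $6$-cycle, and every triangle of $X$ carries a unique edge of degree $5$. I would exploit this rigidity throughout.

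I would begin with the locally $5$-large condition, which requires each link to be a flag complex with systole at least $5$. Edge links are cycles of length $5$ or $6$ and so are trivially $5$-large. The work is to show the same for a vertex link $L = \mathrm{lk}(v)$. I would rule out a full $3$-cycle in $L$ by noting that it corresponds to three edges through $v$ that pairwise bound a triangle at $v$ but span no common tetrahedron, and then showing that the three triangles involved cannot simultaneously accommodate the unique-degree-$5$-edge constraint. A full $4$-cycle in $L$ is excluded by an analogous but longer argument on the corresponding ring of four triangles around $v$. Along the way one also verifies that $X$ itself is flag.

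Next I would address the $8$-location condition. Being local, it reduces to producing the controlled fillings of short combinatorial loops in links that the definition of $m$-location prescribes. The possible local pictures are tightly restricted: a $5$- or $6$-cycle around each edge and a distinguished degree-$5$ edge in each triangle yield only a handful of models for the star of a vertex. I would enumerate the finitely many combinatorial types of short loops that can occur in a vertex link and, for each, construct the required filling out of adjacent tetrahedra of the triangulation.

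The main obstacle will be this case analysis, rather than the comparatively soft locally $5$-large condition. The hardest configurations are likely to be loops in regions of a vertex link where most edges have degree $6$, since the degree-$6$ regions supply the least local constraint; disposing of them will probably require combining the local combinatorics with the global $2$-sphere topology of the vertex link, for instance through an Euler-characteristic computation to force at least one degree-$5$ edge into any sufficiently large region.
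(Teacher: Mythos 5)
Your proposal treats \emph{locally $5$--large} correctly in outline, but your reading of $8$--location and the structure of its verification both go wrong in ways that matter. In the paper, $8$--location is a condition on \emph{dwheels}: configurations built from \emph{two} wheels sharing an edge $\langle v_0,v_0'\rangle$ and a vertex $w$. Verifying it is not a per--vertex-link problem of ``filling short loops''; you must exhibit a single vertex $y$ whose $1$--ball contains both wheels, i.e.\ a common neighbour of $v_0,v_0',w$ and the whole boundary cycle, and this is genuinely a statement about two overlapping stars. The paper's route is: show (Lemma~\ref{l:wheels}) that any $5$-- or $6$--wheel sits inside some vertex link, apply this to each of the two wheels to obtain candidate fillers $\ov v_0,\ov v_0'$, and then, when these disagree, pass to the \emph{edge link} $X_{\langle v_0,v_0'\rangle}$ (which is a $5$-- or $6$--cycle) and use the ``only one degree--$5$ edge per triangle'' constraint to force a contradiction. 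Your sketch never mentions edge links and gives no mechanism for merging the two candidate fillers into one, which is where all the content lies.

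Two further gaps. First, the case $(k,l)=(7,5)$ is qualitatively different: a $7$--wheel cannot be contained in a single vertex link, and the paper needs a separate structural lemma (Lemma~\ref{l:7wheels}) producing a \emph{pair} of vertices $y,z$ that jointly fill it; an enumeration of ``loops in a vertex link'' with single-cell fillings would simply fail there. Second, your proposed local argument for flagness/$5$--largeness --- that ``the three triangles involved cannot simultaneously accommodate the unique-degree-$5$-edge constraint'' --- does not work: a full $3$--cycle (or $4$--cycle) in a vertex link can perfectly well sit in a region where all edges have degree $6$, so nothing contradicts the constraint locally. The contradiction is global to the $2$--sphere link, and the paper obtains it (for both $5$--largeness and the wheel-extension statements) by passing to the dual \emph{soccer tiling} and invoking the diagram results of \cite{EMcCM} (boundary-length bounds, exposed paths of length $4$). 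Your Euler-characteristic idea points at the right phenomenon, but without the pentagon/hexagon dual picture the case analysis is not visibly finite, and the two specific mechanisms the paper relies on --- wheel absorption into a vertex link, and the degree-$5$ edge living in $X_{\langle v_0,v_0'\rangle}$ --- are absent from your plan.
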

This provides a new proof of Thurston's conjecture. Our methods are quite elementary and we do not use computer computations. Moreover, Theorem B shows that Theorem A is a generalization of the original statement. There are 
$8$--located triangulations of $3$--manifolds that are not $5/6^*$--triangulations. Furthermore, our proof provides a uniform bound
on the hyperbolicity constant and describes the structure of combinatorial balls in $5/6^*$--triangulations.
\medskip

Clearly, Theorem A applies in a much broader context than the one of $3$--manifolds. It concerns general simplicial complexes. In Section~\ref{s:fin} we indicate other applications.
We also relate $8$--location to the well known notion of ``combinatorial nonpositive curvature" --- \emph{systolicity}.
Note that systolic complexes are in a sense very (asymptotically) far from triangulations of manifolds above dimension $2$. 
This makes the $8$--location very interesting, since it may be applied, unlike systolicity, to some classical spaces and groups.
In the same Section~\ref{s:fin} we briefly discuss $m$--location, for $m<8$, as an analogue of a nonpositive, but not negative,
curvature.

The proof of Theorem A is presented in Section~\ref{s:hyp}. It uses a local-to-global technique developed by the author
in \cite{O-cnpc}. The main global property of simply connected $8$--located complexes --- the property SD' --- is a variation of the main feature
of \emph{weakly systolic complexes} --- the \emph{simple descent property}.
In Section~\ref{s:568} we prove Theorem B. 

\medskip

\noindent
{\bf Acknowledgment.} I thank Tadeusz Januszkiewicz and Misha Kapovich for discussions on triangulations of manifolds.

This research was supported by Narodowe Centrum Nauki, decision no DEC-2012/06/A/ST1/00259, and by the ERC grant ANALYTIC no.\ 259527.
\section{Preliminaries}
\label{s:prel}


\subsection{Simplicial complexes}
\label{s:simp}

Let $X$ be a simplicial complex. The $i$--skeleton of $X$ is denoted by $X^{(i)}$.
A subcomplex $Y$ of $X$ is \emph{full} if
every subset $A$ of vertices of $Y$ contained in a simplex of $X$, is
contained in a simplex of $Y$.
For a finite set $A=\{ v_1,\ldots,v_k \}$ of vertices of $X$, by $\la A \ra$ or by $\langle v_1,\ldots,v_k \rangle$ we denote the \emph{span} of $A$, i.e.\ the smallest full subcomplex of $X$ containing $A$.
Thus ``$\langle A \rangle \in X$" or ``$\langle v_1,v_2,\ldots \rangle \in X$" mean that the corresponding sets span a simplex in $X$.
We write $v\sim v'$ (respectively, $v\nsim v'$) if $\langle v,v' \rangle \in X$ (respectively, $\langle v,v' \rangle \notin X$). Moreover, we write $v\sim v_1,v_2,\ldots$ (respectively, $v\nsim v_1,v_2,\ldots$) when $v\sim v_i$ (respectively, $v\nsim  v_i$) for $i=1,2,\ldots$.
A simplicial complex $X$ is \emph{flag} whenever every finite set of vertices of $X$ joined pairwise by edges in $X$, is contained in a simplex of $X$.
A \emph{link} of a simplex $\sigma$ of $X$ is a simplicial complex $X_{\sigma}=\{ \tau | \; \tau \in X \; \& \; \tau \cap \sigma=\emptyset \; \& \; \la \tau \cup \sigma\ra\in X \}$.

Let $k\geqslant 4$. A \emph{$k$--cycle} $(v_1,\ldots,v_{k})$ is a triangulation of a circle consisting of $k$ vertices: $v_1,\ldots,v_{k}$, and $k$ edges: $\langle v_i,v_{i+1}\rangle $ and $\langle v_k,v_{1}\rangle $.
A \emph{$k$--wheel (in $X$)} $(v_0;v_1,\ldots,v_k)$ (where $v_i$'s are vertices of
$X$) is a subcomplex of $X$ such that $(v_1,\ldots,v_k)$ is a full cycle, and $v_0 \sim v_1,\ldots,v_k$.
A flag simplicial complex $X$ is \emph{$k$--large} if there are no full $j$--cycles in $X$, for $j<k$.
$X$ is \emph{locally $k$--large} if all its links are $k$--large. Observe that local $5$--largeness means that there are no $4$--wheels.

If not stated otherwise, speaking about a simplicial complex $X$,
we always consider the metric on the $0$--skeleton $X^{(0)}$,
defined as the number of edges in the shortest $1$--skeleton path joining two given vertices.
We denote this metric by $d(\cdot,\cdot)$.
Given a nonnegative integer $i$ and a vertex $v\in X$, a (combinatorial) \emph{ball} $B_i(v,X)$
(respectively, \emph{sphere} $S_i(v,X)$) \emph{of radius $i$
around $v$} (or \emph{$i$--ball} around $v$) is a full subcomplex of $X$ spanned by vertices at distance
at most $i$ (respectively, at distance $i$) from $v$. We use the notation $B_i(v)$ and $S_i(v)$ for, respectively,
$B_i(v,X)$ and $S_i(v,X)$, when it is clear what is the underlying complex $X$. 


\subsection{Location}
\label{s:loc}
\begin{figure}[h!]
\centering
\includegraphics[width=0.7\textwidth]{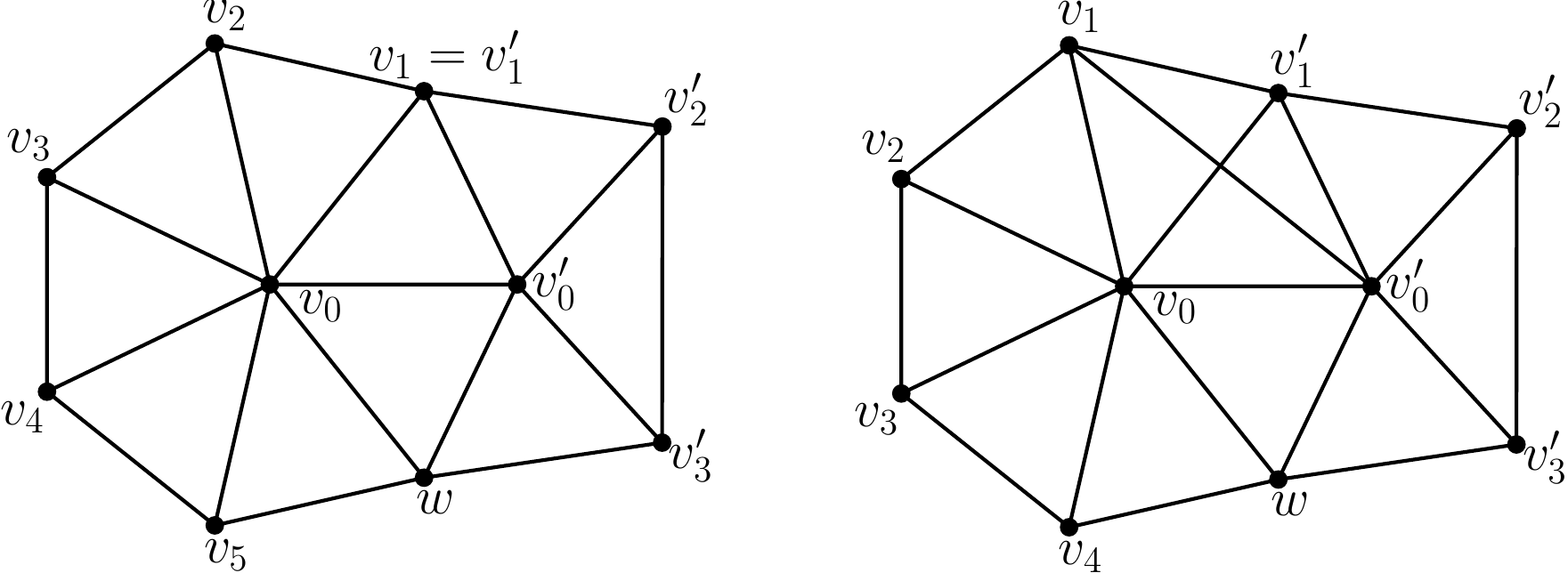}
\caption{Two types of dwheels with the boundary length $8$: a $(7,5)$--dwheel (left), and a $(6,5)$--dwheel (right).}
\label{f:dwheels}
\end{figure}

A \emph{$(k,l)$--dwheel} (from ``double wheel") $dW=(v_0,v_0',w;v_1,\ldots, v_{k-2}; v_1',\ldots,v_{l-2}')$ is the union of a $k$--wheel 
$(v_0;v_1,\ldots, v_{k-2},w,v_0')$ and an $l$--wheel $(v_0';v_1',\ldots, v_{l-2}',w,v_0)$, with $v_1=v_1'$ or $v_1\sim v_1'$ --- see Figure~\ref{f:dwheels}.
The \emph{boundary} of the dwheel $dW$ is the cycle $(v_1,\ldots,v_{k-2},w,v_{l-2}',\ldots,v_1')$, and the \emph{boundary length}
 is $k+l-4$ if $v_1=v_1'$ or $k+l-3$ otherwise.

\begin{definition}[$m$--location]
\label{d:loc}
A flag simplicial complex is \emph{$m$--located} if every dwheel with the boundary length at most $m$ is contained in a
$1$--ball --- see Figure~\ref{f:locSD}.
\end{definition}

\begin{figure}[h!]
\centering
\includegraphics[width=0.6\textwidth]{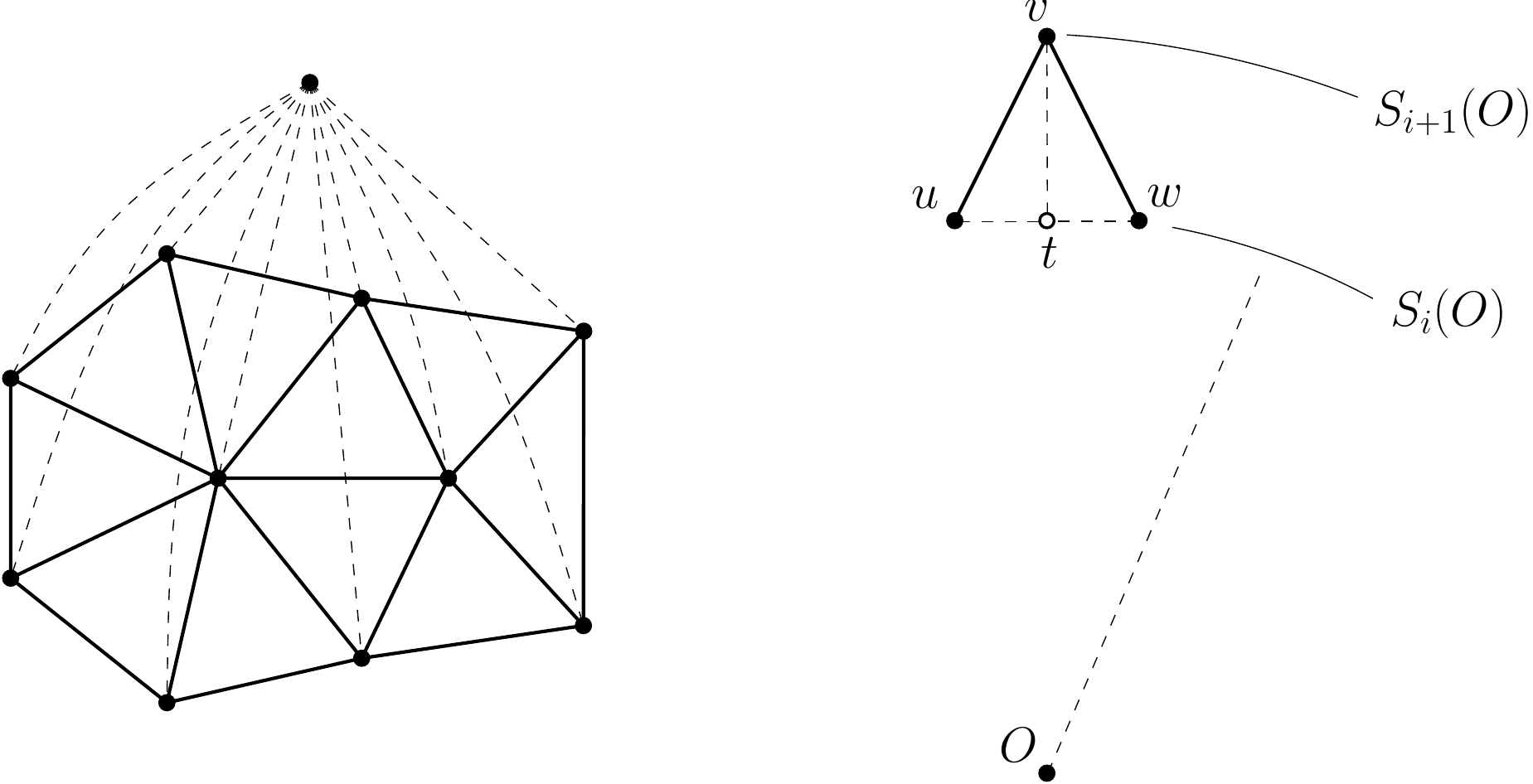}
\caption{$8$--location (left), and the vertex condition (V) for the property SD' (right).}
\label{f:locSD}
\end{figure}

In this article by a \emph{covering} we mean a \emph{simplicial covering}, that is a simplicial map restricting to
isomorphisms from $1$--balls onto spans of their images. In particular, it follows that wheels are mapped isomorphically onto wheels, and thus the
following result holds.

\begin{prop}
\label{p:cov}
A covering of a locally $k$--large complex is itself locally $k$--large.
A covering of an $m$--located complex is itself $m$--located.
\end{prop}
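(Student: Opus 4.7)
Write $p\colon\widetilde X\to X$ for the covering; my plan is to reduce each local property of $\widetilde X$ to the corresponding property of $X$ via the isomorphism on $1$--balls.

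For local $k$--largeness, given any simplex $\widetilde\sigma\in\widetilde X$, pick a vertex $\widetilde v$ of $\widetilde\sigma$; the link $\widetilde X_{\widetilde\sigma}$ is contained in $B_1(\widetilde v,\widetilde X)$, so the covering isomorphism sends it isomorphically onto the link $X_{p\widetilde\sigma}$, which is $k$--large by hypothesis. Hence $\widetilde X_{\widetilde\sigma}$ contains no full $j$--cycle for $j<k$, as required.

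For $m$--location, fix a dwheel $d\widetilde W=(\widetilde v_0,\widetilde v_0',\widetilde w;\widetilde v_1,\ldots,\widetilde v_{k-2};\widetilde v_1',\ldots,\widetilde v_{l-2}')$ in $\widetilde X$ of boundary length at most $m$. First I would show that $p$ is injective on the vertex set of $d\widetilde W$: the $k$--wheel lies in $B_1(\widetilde v_0,\widetilde X)$ and the $l$--wheel in $B_1(\widetilde v_0',\widetilde X)$, so $p$ is injective on each wheel separately. The only candidate for a collision is a pair $p\widetilde a=p\widetilde b$ with $\widetilde a$ interior to the $k$--wheel (so $\widetilde a\sim\widetilde v_0$ but $\widetilde a\nsim\widetilde v_0'$) and $\widetilde b$ interior to the $l$--wheel. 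Then $p\widetilde a=p\widetilde b$ is adjacent in $X$ to both $p\widetilde v_0$ and $p\widetilde v_0'$; since $\widetilde v_0\sim\widetilde v_0'$, the triangle $\{p\widetilde a,p\widetilde v_0,p\widetilde v_0'\}$ already lies in the span of the image of $B_1(\widetilde v_0,\widetilde X)$, and pulling the edge $p\widetilde a\sim p\widetilde v_0'$ back through the local isomorphism yields $\widetilde a\sim\widetilde v_0'$, a contradiction. Consequently $p(d\widetilde W)$ is a dwheel in $X$ of the same boundary length.

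Finally, $m$--location of $X$ provides a vertex $u\in X$ with $p(d\widetilde W)\subseteq B_1(u,X)$, and I would lift $u$ through the isomorphism at $\widetilde v_0$ to obtain $\widetilde u\sim\widetilde v_0$ with $p\widetilde u=u$. Every vertex of the $k$--wheel is then adjacent (or equal) to $\widetilde u$ by the same isomorphism, and the triangle argument above, now applied to $\{u,p\widetilde v_0,p\widetilde v_0'\}$, gives $\widetilde u\sim\widetilde v_0'$, so the isomorphism at $\widetilde v_0'$ shows $\widetilde u$ is adjacent to every vertex of the $l$--wheel as well. This yields $d\widetilde W\subseteq B_1(\widetilde u,\widetilde X)$. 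The main technical obstacle is the injectivity/lifting argument: both pieces exploit the edge $\widetilde v_0\sim\widetilde v_0'$ to transport triangles between the two central $1$--balls through the local isomorphism, and one must also verify that the relevant target vertices always lie in the span of the image so that the pull--backs are legitimate.
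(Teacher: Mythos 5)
Your proof is correct and fills in details that the paper compresses into a single sentence (the paper merely observes that a simplicial covering maps wheels isomorphically onto wheels and declares the result to follow). The first half is sound: the local isomorphism identifies each link in $\widetilde X$ with a full subcomplex of the corresponding link in $X$, and a full subcomplex of a $k$--large flag complex is again $k$--large. For the second half, the genuinely non-obvious step is precisely your injectivity argument, where the edge $\widetilde v_0\sim\widetilde v_0'$ is used to transport the relation $p\widetilde a\sim p\widetilde v_0'$ back through the isomorphism at $\widetilde v_0$; the parenthetical ``so $\widetilde a\nsim\widetilde v_0'$'' is not literally true for $\widetilde a=\widetilde v_1$, but that case is covered by injectivity of $p$ on $B_1(\widetilde v_0')$ alone, so the structure of the argument is unaffected.

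The one point you flag but leave open is real and worth settling: lifting $u$ requires $u\in p\bigl(B_1(\widetilde v_0,\widetilde X)\bigr)$, and the paper's phrase ``isomorphism onto the span of the image'' does not by itself force this (a full subcomplex would satisfy it). One must read the definition of simplicial covering as requiring $p$ to restrict to an isomorphism $B_1(\widetilde v)\to B_1(p\widetilde v,X)$, i.e.\ local surjectivity; this is the standard convention, it is what the constructed universal cover in Theorem~\ref{t:logl} satisfies (condition $(R_i)$ for interior vertices), and Proposition~\ref{p:cov} is applied only to that cover. With this reading, $u\in B_1(p\widetilde v_0,X)=p(B_1(\widetilde v_0,\widetilde X))$ and your lift is legitimate; the same reading makes the two pull-backs of edges in your final paragraph automatic.
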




\section{Hyperbolicity}
\label{s:hyp}
The goal of this section is to prove Theorem A. In Subsection~\ref{s:glo} we introduce a global combinatorial property $SD'$ that,
together with $8$--location and local $5$--largeness, implies hyperbolicity --- Theorem~\ref{t:hyp}. In the subsequent 
Subsection~\ref{s:lotoglo}, we prove a local-to-global Theorem~\ref{t:logl} implying directly Theorem A.

\subsection{Global condition}
\label{s:glo}

\begin{definition}[Property ${SD'}$]
\label{d:sd}
Let $X$ be a flag simplicial complex, let $O$ be its vertex, and let $n\in \{0,1,2,3,\ldots\}$.
We say that
$X$ \emph {satisfies the property ${SD'}_n(O)$} if for every $i=1,2,\ldots,n$ the following two conditions hold.
\begin{description}
  \item[({\bf T}) \rm(Triangle condition)] For every
edge $e \in S_{i+1}(O)$ the intersection
 $X_e\cap B_i(O)$ is non-empty.
  \item[({\bf V}) \rm{(Vertex condition)}] For every vertex $v \in S_{i+1}(O)$, and for every two vertices
$u,w\in X_v\cap B_i(O)$, there exists a vertex $t\in X_v\cap B_i(O)$ such that $t\sim u,w$ --- see Figure~\ref{f:locSD}.
\end{description}

$X$ satisfies the property \emph{$SD'(O)$} (respectively, the  property \emph{$SD'$}) if $SD'_n(O)$ holds for every $n$
(respectively, for every $n$ and $O$). 
\end{definition}

\begin{prop}[Simple connectedness]
\label{p:sconn}
Let $X$ be a simplicial complex satisfying the property $SD'(O)$, for some vertex $O$. Then $X$ is simply connected.
\end{prop}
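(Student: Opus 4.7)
The plan is to induct on the radius $n$ and prove that every edge loop contained in $B_n(O)$ is null-homotopic in $X$. Since every edge loop lies in some $B_n(O)$, this yields simple connectedness (path-connectedness of $X$, restricted to the component of $O$ in which the proposition is meaningful, is automatic from the definition of the combinatorial metric). The argument mirrors the descent strategy used for (weakly) systolic complexes, matching the paper's remark that $SD'$ is a variation of the simple descent property.

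For the base case $n = 1$, the full subcomplex $B_1(O)$ is a simplicial cone $O \ast X_O$ by flagness: whenever $v, v'$ are neighbours of $O$ joined by an edge, the triangle $\la O, v, v'\ra$ belongs to $X$. Hence $B_1(O)$ is contractible and every loop there is null-homotopic.

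For the inductive step, suppose $n \geqslant 2$ and that every loop in $B_{n-1}(O)$ is null-homotopic. Given a loop $\gamma = (v_0, \ldots, v_k = v_0)$ in $B_n(O)$, I would build a homotopic loop $\gamma'$ lying in $B_{n-1}(O)$ as follows. For each edge $\la v_i, v_{i+1}\ra$ of $\gamma$ with both endpoints in $S_n(O)$, condition (T) supplies a vertex $t_{i,i+1} \in X_{\la v_i, v_{i+1}\ra} \cap B_{n-1}(O)$. For each vertex $v_i \in S_n(O)$ of $\gamma$, the two edges of $\gamma$ incident to $v_i$ determine two elements of $X_{v_i} \cap B_{n-1}(O)$ --- each being either $v_{i\pm 1}$ (if that neighbour already lies in $B_{n-1}$) or the previously chosen $t$ (otherwise) --- and condition (V) then furnishes a common neighbour $s_i \in X_{v_i} \cap B_{n-1}(O)$ of these two. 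Replacing each $v_i \in S_n(O)$ in $\gamma$ by $s_i$ and inserting each $t_{i,i+1}$ between the corresponding two sphere vertices produces the desired loop $\gamma'$ in $B_{n-1}(O)$. The region between $\gamma$ and $\gamma'$ then decomposes into triangles of $X$ --- all required edges are present by construction and flagness supplies the $2$--simplices --- so $\gamma$ and $\gamma'$ are homotopic in $X$. By the inductive hypothesis $\gamma'$, and hence $\gamma$, is null-homotopic.

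The main care needed is the explicit bookkeeping of the filling disk between $\gamma$ and $\gamma'$: one has to chase exactly which triangle is used at each vertex and each edge of $\gamma$, and handle degenerate configurations --- a sphere arc consisting of a single vertex, a loop lying entirely in $S_n(O)$, or accidental coincidences among the chosen $s_i$ and $t_{i,i+1}$ that shorten the replacement --- case by case. None of these alters the structure of the argument.
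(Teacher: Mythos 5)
Your proof is correct and follows the same radial descent strategy the paper has in mind: the paper's proof is only a pointer to \cite[Lemma 5.5]{BCCGO} with the quadrangle condition swapped out for the vertex condition (V), and your write-up is precisely that argument carried out. The inward push via (T) on sphere edges and (V) on sphere vertices, the triangular/pentagonal filling between $\gamma$ and $\gamma'$ supplied by flagness, and the base case $B_1(O)=O\ast X_O$ all line up with the intended reference proof.
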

\begin{proof}
The proof follows closely the proof of \cite[Lemma 5.5]{BCCGO}. Here, instead of the quadrangle condition one
uses the vertex condition (V) from Definition~\ref{d:loc}.
\end{proof}

\begin{lemma}
\label{l:21}
Let $X$ be an $8$--located locally $5$--large simplicial complex satisfying the property $SD_n'(O)$, for some vertex $O$. 
Let $v\in S_{n+1}(O)$ and let $y,z\in X_v\cap B_{n}(O)$ be such that $y\nsim z$.
Let $x\in X_v\cap B_{n}(O)$ be a vertex adjacent to $y,z$, given by the vertex condition (V).
Let $y',z'\in B_{n-1}(O)$ be vertices with $\la x,y,y' \ra,\la x,z,z' \ra \in X$, given by the triangle condition (T).
Then $y'\neq z'$; $y'\nsim z$; $y\nsim z'$; and $y'\sim z'$. 
\end{lemma}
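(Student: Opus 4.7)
Each of the four conclusions will be proved by contradiction. The first three use only local $5$-largeness, producing a forbidden $4$-wheel at $x$; the fourth uses $8$-location via an auxiliary vertex produced by (V).

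\textbf{Preliminary observation.} Since $v \in S_{n+1}(O)$ and $y, z, x \in X_v \cap B_n(O)$, in fact $y, z, x \in S_n(O)$; similarly $y', z' \in S_{n-1}(O)$. Consequently $v$ has combinatorial distance at least $2$ from every vertex of $B_{n-1}(O)$; in particular $v \nsim y', v \nsim z'$, and $v \nsim t$ for any vertex $t$ I may produce in $B_{n-1}(O)$.

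\textbf{The first three assertions.} Assume in turn that $y' = z'$, $y' \sim z$, or $y \sim z'$. In each case I verify that $(y, v, z, y')$, respectively $(y', y, v, z)$, respectively $(y, v, z, z')$, is a full $4$-cycle in $X$: all four edges are present from the given triangles $\la x, y, y'\ra, \la x, z, z'\ra$ together with the adjacencies $v \sim y, z$, and both diagonals of the cycle fail because $y \nsim z$ (hypothesis) and the other diagonal involves $v$ together with a vertex of $B_{n-1}(O)$ (distance). Since $x$ is adjacent to each of the four cycle vertices, this gives a forbidden $4$-wheel at $x$, contradicting local $5$-largeness.

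\textbf{The fourth assertion.} Suppose $y' \nsim z'$. Applying the vertex condition (V) of $SD_{n-1}'(O)$ (implied by $SD_n'(O)$) to $x \in S_n(O)$ and to the non-adjacent pair $y', z' \in X_x \cap B_{n-1}(O)$ produces $t \in X_x \cap B_{n-1}(O)$ with $t \sim y', z'$; thus $t \in S_{n-1}(O)$ and $t$ is distinct from $v, x, y, z, y', z'$. Consider the $6$-cycle $\gamma = (y', y, v, z, z', t)$ in $X$. All six edges are present; among the nine potential chords, every one fails by hypothesis, by the three assertions already proved, or by distance, \emph{except possibly} the pairs $\{y, t\}$ and $\{z, t\}$. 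If both $y \sim t$ and $z \sim t$, then $(y, v, z, t)$ is a full $4$-cycle and $(x; y, v, z, t)$ is a forbidden $4$-wheel. Otherwise $\gamma$ contains a full $5$-cycle (if exactly one of the chords exists) or is itself a full $6$-cycle (if neither does), each producing a wheel at $x$. In each of these remaining cases I then assemble a dwheel of boundary length at most $8$ from the configuration around $x$ -- possibly after one further application of (V) or (T) to produce a second center or a shared vertex. Since the resulting dwheel contains both $v \in S_{n+1}(O)$ and a vertex of $S_{n-1}(O)$, which are at distance at least $2$ and so have no common neighbor in $X$, the dwheel cannot be contained in any $1$-ball. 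This contradicts $8$-location.

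\textbf{Main obstacle.} The delicate step is the explicit construction of the dwheel in the last sub-case, where neither $y \sim t$ nor $z \sim t$: the $6$-wheel at $x$ is not itself a dwheel, so a second center must be introduced, presumably via an additional application of (V) or (T) (for instance to the edge $\la y', t\ra$ or $\la z', t\ra$ inside $S_{n-1}(O)$). Ensuring that the boundary length of the resulting dwheel remains at most $8$ -- so that $8$-location is actually applicable -- will require careful bookkeeping, and is the combinatorial heart of the argument.
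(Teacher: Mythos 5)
Your approach is the same as the paper's, and the first three assertions are handled correctly in the same way (local $5$-largeness excluding a $4$-wheel at $x$). For the fourth assertion you also start correctly: apply (V) at $x$ to the pair $y', z'$ to get $t \in X_x \cap B_{n-1}(O)$ adjacent to $y', z'$, and rule out $y \sim t \sim z$ by another $4$-wheel argument. However, the remainder has two gaps, one of which you acknowledge and one of which you do not.

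The gap you acknowledge --- actually constructing the dwheel --- is indeed the substance of the step, and your description of the fix is slightly off target. You speculate about producing ``a second center or a shared vertex,'' but in the paper's argument the second center is simply $t$ (your $x'$), which you already have. What is missing is the second wheel around $t$: apply (T) to the edges $\la t, y'\ra$ and $\la t, z'\ra$ lying in $S_{n-1}(O)$ to obtain $y'', z'' \in S_{n-2}(O)$ with $\la t,y',y''\ra, \la t,z',z''\ra \in X$, then (V) at $t$ to the pair $y'', z''$ to obtain $x'' \in X_t \cap S_{n-2}(O)$ adjacent to $y'', z''$. The wheel at $x$ (of size $5$ or $6$) and the wheel at $t$ (built from $x, y', y'', x'', z'', z'$) share the triangle $\la x, t, z'\ra$ (or $\la x, t, y'\ra$), giving a $(k,l)$-dwheel on the vertices $\{v,x,y,z,t,y',z',x'',y'',z''\}$ of boundary length at most $8$.

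The gap you do not acknowledge is in your concluding distance argument, and it would break the proof even if you had the dwheel in hand. You claim that because the dwheel contains $v \in S_{n+1}(O)$ and a vertex of $S_{n-1}(O)$, ``which are at distance at least $2$,'' it cannot lie in a $1$-ball. That is false: two vertices at distance exactly $2$ certainly can have a common neighbor ($x$ itself is one, being adjacent to both $v$ and $y'$), and a $1$-ball has diameter $2$, so it accommodates distance-$2$ pairs with no difficulty. This is precisely why the construction must descend to $S_{n-2}(O)$: the dwheel built as above contains both $v$ and $y''$, which satisfy $d(v, y'') \geqslant 3$, and \emph{that} is incompatible with containment in any $B_1(p)$. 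Without the descent to level $n-2$ the contradiction does not materialize.
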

\begin{proof}
By $5$--largeness we have immediately that $y'\neq z'$; $y'\nsim z$; and $y\nsim z'$. For the rest we proceed by contradiction.
Assume that $y'\nsim z'$. Then, by the vertex condition (V), there is a vertex $x'\in S_{n-1}(O)$
adjacent to $x,y,z$. Note that, by $5$--largeness,  it is not possible that $y\sim x'\sim z$, thus there is a $6$--wheel $(x;v,z,z',x',y',y)$, or 
a $5$--wheel $(x;v,z,z',x',y)$, or $(x;v,z,x',y',y)$.
By the triangle (T) and the vertex (V) condition there are vertices $y'',x'',z'' \in S_{n-2}(O)$ with $\la x',y',y'' \ra,
\la x',z',z'' \ra \in X$, and $x''\sim x',y'',z''$. 
Possibly $x''=z''$, but $x''\neq y''$, by $5$--largeness.
In any case we obtain an
$(k,l)$--dwheel $dW$, spanned by vertices in $\{v,x,y,z,x',y',z',x'',y'',z''\}$, with the boundary length at most $8$. Thus, by $8$--location,   
$dW$ is contained in a $1$--ball, which contradicts the fact that $d(v,y'')=3$.
\end{proof}

\begin{theorem}[Hyperbolicity]
\label{t:hyp}
Let $X$ be an $8$--located locally $5$--large simplicial complex satisfying the property $SD'$. Then $X^{(0)}$ equipped
with a path metric induced from $X^{(1)}$ is $\delta$--hyperbolic, for a 
universal constant $\delta$.
\end{theorem}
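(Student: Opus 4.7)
The plan is to fix a basepoint $O\in X$ and exploit the sphere decomposition $X^{(0)}=\bigsqcup_{n\geqslant 0} S_n(O)$ together with the descent structure given by the property SD'. The triangle condition (T) lets me build a \emph{descent geodesic} $O=v_0,v_1,\ldots,v_n=v$ for each $v\in S_n(O)$ by iteratively picking a neighbor in the previous sphere; any such path is a geodesic by construction. The goal is to show that descent geodesics starting from nearby vertices can be chosen to stay uniformly close all the way down to $O$, which will give uniformly thin bigons based at $O$. Since SD' is imposed at \emph{every} basepoint, thinness then extends to arbitrary bigons in $X^{(0)}$, yielding Gromov hyperbolicity.

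The heart of the argument is an iterable \emph{thin descent} statement that upgrades Lemma~\ref{l:21}: there is a universal constant $C$ such that for every $u,v\in S_n(O)$ with $d(u,v)\leqslant 1$, descent geodesics $\gamma_u,\gamma_v$ can be chosen with $d(\gamma_u(j),\gamma_v(j))\leqslant C$ for all $j\leqslant n$. I would prove this by downward induction on $j$. Lemma~\ref{l:21} essentially handles the single-step core: when two common children of a vertex in $S_{n+1}(O)$ are nonadjacent, their further descendants in $S_{n-1}(O)$ can be chosen adjacent. The induction step uses $8$--location together with (T) and (V) to rule out configurations in which consecutive descent pairs would spread apart into a dwheel of boundary length at most $8$. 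From thin descent, hyperbolicity follows by a standard argument: any two $O$--to--$v$ geodesics can be massaged, sphere by sphere, into descent paths that stay within $C$ of each other, so $O$--based bigons have thickness at most $C$. Varying $O$ gives uniform bigon-thinness throughout $X^{(0)}$, which for a graph metric is equivalent (by Papasoglu) to $\delta$--hyperbolicity with $\delta$ depending only on $C$; since $C$ depends only on the numerical parameters in $8$--location and local $5$--largeness, the resulting $\delta$ is universal.

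The main obstacle will be the inductive descent itself. Two close vertices in a sphere can relate to the previous sphere in several distinct configurations (shared common neighbor, two adjacent neighbors, or neighbors linked only through a further vertex), and for each case one has to identify a forbidden dwheel of boundary length at most $8$ that materialises when descent fails to contract, then invoke $8$--location to force it into a $1$--ball and derive a contradiction with the sphere-distance of some vertex, in the style of the last paragraph of the proof of Lemma~\ref{l:21}. The delicate point is making coherent descent choices across many levels, so that the thickness bound $C$ does not accumulate with depth; a naive per-sphere selection will likely introduce drift, and one probably needs a canonical selection rule, or an at-each-level correction procedure, to keep the bound universal.
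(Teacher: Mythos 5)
Your high-level strategy matches the paper's: both invoke Papasoglu's criterion and aim to show that geodesic bigons (equivalently, slices of intervals) are uniformly thin. However, the mechanism you propose for proving thinness is genuinely different from the paper's, and it has a gap that you yourself flag but do not close.

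The paper does not do a downward induction on sphere levels. Instead, for two endpoints $O,O'$ it works with the two--sided interval slices $I_k=S_k(O)\cap S_{n-k}(O')$, takes the \emph{maximal} $k$ at which $I_k$ has diameter $>2$, and derives a contradiction within a bounded number of levels. Crucially, the argument projects both downward (toward $O$, producing $w_i\in I_{k-1}$, $u_j\in I_{k-2}$, $t\in I_{k-3}$) and upward (toward $O'$, producing $p_j\in I_{k+1}$ and $q_j\in I_{k+2}$). In all five terminal cases (I(a), I(b), II(a), II(b), III) the forbidden dwheel that contradicts $8$--location involves the upward projections; the constraint coming from the second endpoint $O'$ is not decorative but essential. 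Your plan uses descent toward $O$ only, so you have no handle with which to reproduce these contradictions: two adjacent vertices of $S_n(O)$ are not assumed to lie on geodesics to a common farther endpoint, and there is nothing in SD' that forces their downward descents to converge from one--sided data alone. Lemma~\ref{l:21}, which you lean on, is itself a local statement about a single level transition and does not by itself yield a uniform bound over many levels.

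The second, more concrete gap is the one you name: \emph{drift}. A per--level descent argument in the style you sketch naturally produces a bound that grows linearly with depth, not a universal $C$. You acknowledge that some ``canonical selection rule'' or ``at--each--level correction procedure'' would be needed, but this is precisely the nontrivial content of the theorem and cannot be waved at. The paper sidesteps drift entirely: because it works at the maximal bad level $k$, a contradiction must appear after examining at most three or four adjacent levels, so no bound accumulates. If you want to repair your approach, the honest fix is essentially to re-introduce the second endpoint, pass to interval slices $I_k$, and run a maximality argument -- at which point you have reconstructed the paper's proof rather than found an alternative one.
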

\begin{proof}
We use a criterion by Papasoglu \cite{Papa}, i.e.\ we reduce the proof to showing that intervals are uniformly thin.
Let $O,O'$ be two vertices and let $I$ be the interval between them, i.e.\ the set of vertices
lying on geodesics between $O$ and $O'$. By $I_k$ we denote the intersection $S_{k}(O)\cap I=S_k(O)\cap S_{n-k}(O')$, where 
$n=d(O,O')$. We show that for every $k\leqslant n$, for every two vertices $v,w \in I_k$,
we have $d(v,w)\leqslant 2$. This shows also that the hyperbolicity constant is universal.
\medskip

By contradiction --- let $k$ be the maximal number such that there are vertices $v,w \in I_k$ with $d(v,w)>2$.
Then in $I_{k+1}$ there exist vertices $v'\sim v$, and $w'\sim w$ with $d(v',w')\leqslant 2$.
By the vertex condition (V), there is a vertex $z\sim v',w'$ in $I_{k+1}$, possibly with $z=w'$.
By the triangle condition (T) there are vertices $v'',w'' \in I_k(O)$, with $v''=w''$ if $z=w'$, such that 
$v''\sim v',z$, and $w''\sim z,w'$. 
By the vertex condition (V), in $I_k$ there exist vertices $s\sim v,v',v''$; $t\sim v'',z,w''$; $u \sim w,w',w''$, possibly,
with $s=v''$, $t=w''$, $u=w$. Among the vertices $t,w'',u,w$ we choose the first one (in the given order), that is at distance
$3$ from $v$. Denote this vertex by $v'''$. Using the vertex condition (V), we obtain a full path $v_1v_2v_3v_4$ in $I_k$ of diameter $3$, with
$v_1=v$ and $v_4=v'''$. 
In the remaining part of the proof we show that the existence of such path is impossible. This leads to a contradiction yielding  
the theorem.

For $i=1,2,3$, let $w_i\sim v_i,v_{i+1}$ be a vertex in $I_{k-1}$ given by the triangle condition (T). 
All the possible cases (up to renaming vertices) of mutual relations between vertices $w_i$ are shown in Figure~\ref{f:hyp} --- Cases:
I, II, and III. 
\medskip

\noindent
{\bf Case I.} In this case we assume that for all triples of vertices $w_i$ as above we are not in Case II or III, that is $w_i\neq w_j$ , for $i\neq j$. First, suppose that $w_1\sim w_2 \sim w_3$. We may assume that $w_1\nsim w_3$. By (T), there are vertices 
$u_1,u_2 \in I_{k-2}$ with $u_j\sim w_j,w_{j+1}$. 
If $u_1=u_2$ then we are in Case I(a) (see Figure~\ref{f:hyp}). Further we assume that there are no such $u_i$. 
If $u_1\sim u_2$ then, by (T), there is a vertex $t\in I_{k-3}$ 
adjacent to $u_1,u_2$ --- Case I(b) (see Figure~\ref{f:hyp}). If not then, by (V), there is a vertex $u'\in I_{k-2}$ adjacent to $u_1,u_2$.
If $u'\sim w_1$ then we are in the previous case taking $u'$ instead of $u_1$. Similarly for $u'\sim w_3$, thus further we assume
that $u'\nsim w_1,w_3$.
By (T), in $I_{k-3}$ there exist $t_1,t_2$ with $t_i \sim u',u_i$. By Lemma~\ref{l:21}, we have $t_1\neq t_2$ and $t_1\sim t_2$.
It follows that there is a $(7,5)$--dwheel $(w_2,u',u_2;u_1,w_1,v_2,v_3,w_3;u_1,t_1,t_2)$ $dW$. By $8$--location, $dW$ is
contained in a $1$--ball --- contradiction, since $d(v_2,t_1)= 3$.

\begin{figure}[h!]
\centering
\includegraphics[width=0.7\textwidth]{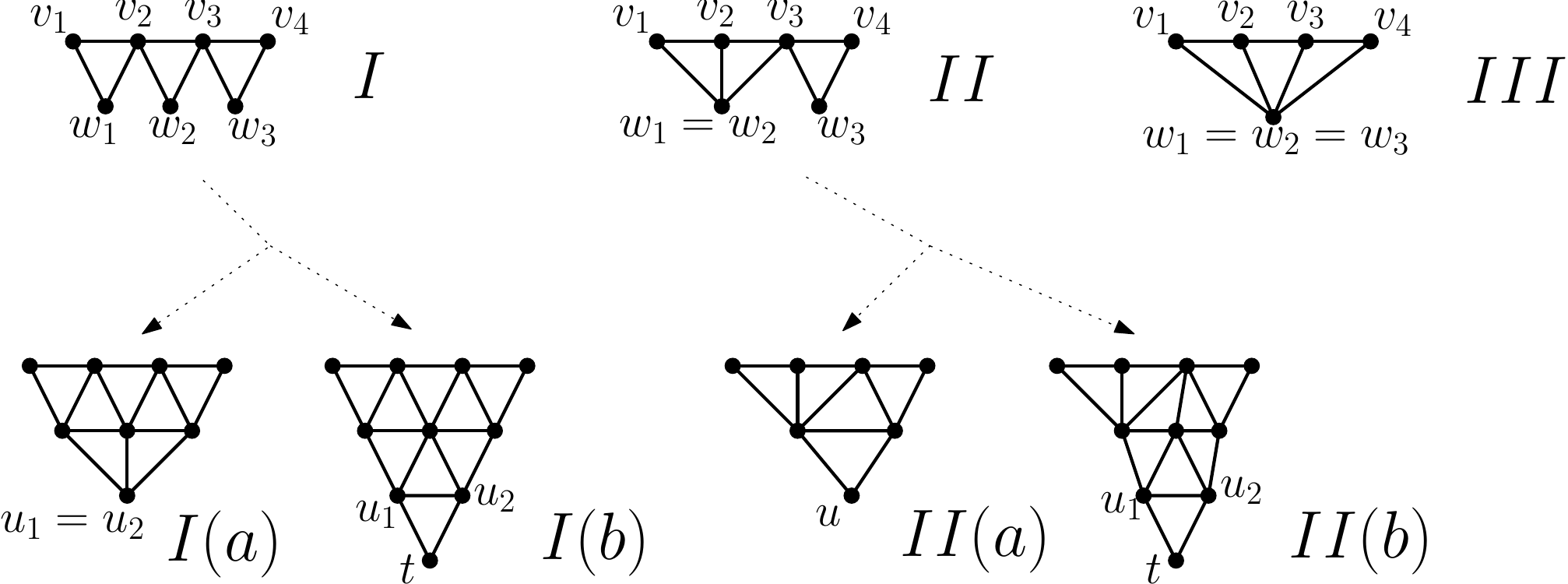}
\caption{Possible cases in the proof of Theorem~\ref{t:hyp}.}
\label{f:hyp}
\end{figure}

 Now we suppose that $w_1 \sim w_2$, and $w_2\nsim w_3$. Then, by (V), in $I_{k-1}$ there is $w'\sim v_3,w_2,w_3$. We may assume that $w'\nsim w_1$. By (T), in 
$I_{k-2}$
there are $u_2,u_3$, with $u_i\sim w',w_i$. By Lemma~\ref{l:21},  $u_2 \neq u_3$; $u_2\nsim w_3$; $w_2\nsim u_3$; and $u_2\sim u_3$. Therefore, $w_1,v_2,v_3,w_3,u_3$ are contained in a
 $(5,m)$--dwheel $dW$, with the boundary length at most $8$. By $8$--location, $dW$ is
contained in a $1$--ball $B_1(w'')$. Replacing $w_2$ by $w''$ we are in Case I(a) or I(b).

If $w_1 \nsim w_2 \nsim w_3$ then, by (V), in $I_{k-1}$ there are $w_1',w_2'$ with $w_j'\sim v_{j+1},w_j,w_{j+1}$.
By (T), in $I_{k-2}$ there exist $u_1,u_1',u_2,u_2'$ with $u_j\sim w_j,w_j'$, and $u_j'\sim w_j',w_{j+1}$.
By Lemma~\ref{l:21} $u_1\neq u_1'$; $u_1 \nsim w_2$; $w_1 \nsim u_1'$; and $u_1\sim u_1'$. Thus the vertices $u_1,w_1,v_2,v_3,w_2',u_2$ are contained in a $(5,m)$--dwheel $dW$, with the boundary length at most $8$.
By $8$--location, $dW$ is
contained in $B_1(w)$, and hence, replacing $w_2$ by $w$, we are in the previous case.  
\medskip

\noindent
{\bf Case II.} Proceeding similarly as in Case I above (i.e.\ using the conditions (V) and (T), and Lemma~\ref{l:21}), we 
come to the two possible cases: II(a) and II(b), as in Figure~\ref{f:hyp} --- with $u,u_1,u_2\in I_{k-2}$ and $t\in I_{k-3}$.
\medskip

In the remaining part of the proof we analyze separately all the cases: I(a), I(b), II(a), II(b), and III, showing that they lead to contradiction.
To do this we ``project" edges and vertices towards $O'$ now: By (T), in $I_{k+1}$ there exist vertices $p_1,p_2,p_3$ (possibly some of them coinciding), with $p_j \sim v_j,v_{j+1}$. By (V), we have that 
$d(p_j,p_{j+1})\leqslant 2$.
\medskip

\noindent
\emph{Ad Case III: }This leads to a contradiction, since $d(v_1,v_4)=3$.
\medskip

\noindent
\emph{Ad Case II(a): }By Lemma~\ref{l:21}, we have $p_1\nsim v_3$; $v_1\nsim p_2$; $p_1\neq p_2$; and $p_1\sim p_2$. Therefore, the vertices $p_1,v_1,w_1,w_3, v_4,p_3$ are then contained in a $(5,m)$--dwheel  $dW$ with the boundary length at most $8$. 
Thus, by $8$--location,
$dW$ is contained in a $1$--ball --- contradiction, since $d(v_1,v_4)=3$.
\medskip

\noindent
\emph{Ad Case II(b): }By Lemma~\ref{l:21}, we have $p_1\nsim v_3$; $v_1\nsim p_2$; $p_1\neq p_2$; and $p_1\sim p_2$. 
By (V), there is $p'\in I_{k+1}$ with $p'\sim p_2,p_3,v_3$, possibly with $p'=p_3$. We may assume that $p'\nsim p_1$.
By (T), in $I_{k+2}$ there are vertices $q_1\sim p_1,p_2$ and $q_2\sim p_2,p'$, with $d(q_1,q_2)\leqslant 2$, by (V).
Therefore the vertices $q_1,p_1,v_1,w_1,v_3,p',q_2$ are contained in 
a $(5,m)$--dwheel  $dW$ with the boundary length at most $8$. Thus, by $8$--location,
$dW$ is contained in a $1$--ball --- contradiction, since $d(w_1,q_1)=3$.
\medskip

\noindent
\emph{Ad Case I(a): }The vertices $p_2,v_2,w_1,u_1,w_3,v_4,p_3$ are contained in a $(5,m)$--dwheel  $dW$ with the boundary length at most 
$8$. Thus, by $8$--location,
$dW$ is contained in a $1$--ball --- contradiction, since $d(u_1,p_3)=3$.
\medskip

\noindent
\emph{Ad Case I(b): }If $p_1=p_2$ or $p_1\sim p_2$ then we obtain a $(6,m)$--dwheel containing $p_1,v_1,w_1,u_1,u_2,w_3,v_3,p_2$,
 leading to contradiction, by $8$--location. Similarly, when $p_2=p_3$ or $p_2\sim p_3$. Thus, further we assume that
$d(p_j,p_{j+1})=2$. By (V), in $I_{k+1}$ there is a vertex $p'\sim p_1,p_2$. We may assume  that $v_1\nsim p' \nsim v_3$.
By (T), in $I_{k+2}$ there exist vertices $q_1,q_2$, such that $q_j\sim p_j,p'$. 
By Lemma~\ref{l:21}, $q_1 \neq q_2$; $q_1\nsim p_2$; $p_1\nsim q_2$; and $q_1\sim q_2$.
Thus there is a $(7,5)$--dwheel $(v_2,p',p_2;p_1,v_1,w_1,w_2,v_3;p_1,q_1,q_2)$, which lies in a $1$--ball,
by $8$--location. This however contradicts the fact that $d(w_1,q_1)=3$.
\end{proof}

\subsection{Local-to-global}
\label{s:lotoglo}

In this section we prove the following result implying Theorem A from Introduction.

\begin{theorem}[Local-to-global]
\label{t:logl}
Let $X$ be an $8$--located locally $5$--large simplicial complex. Then its universal cover $\wt X$ is an $8$--located locally $5$--large simplicial complex satisfying the property $SD'$. In particular, $\wt X^{(1)}$ is $\delta$--hyperbolic.
\end{theorem}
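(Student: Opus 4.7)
The strategy is the standard local-to-global approach used for systolic and weakly systolic complexes: construct the universal cover $\wt X$ as a colimit of combinatorial balls built inductively around a chosen basepoint, arranging that the property $SD'$ holds at every stage. By Proposition~\ref{p:cov}, any covering of $X$ automatically inherits $8$--location and local $5$--largeness, so the burden of the proof is entirely on producing the global property $SD'$; once $SD'$ is in hand, Proposition~\ref{p:sconn} gives simple connectedness, and Theorem~\ref{t:hyp} gives hyperbolicity with a universal constant $\delta$.

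Fix a vertex $O \in X$, let $\wt{O}$ be a chosen lift, and build simplicial complexes $\tB_n$ together with simplicial maps $p_n : \tB_n \to X$ by induction on $n$ so that $\tB_n = B_n(\wt{O},\tB_n)$, the map $p_n$ restricts to an isomorphism on $1$--balls of interior vertices, and the conditions (T) and (V) of Definition~\ref{d:sd} hold for $i = 0, \ldots, n-1$. To pass from $\tB_n$ to $\tB_{n+1}$, for each $\wt v \in S_n(\wt{O},\tB_n)$ one attaches vertices and simplices corresponding to those simplices of $X_{p_n(\wt v)}$ that lie ``outside'' $B_n(O,X)$. The identifications between new vertices added over adjacent $\wt v, \wt u$ are determined by an equivalence relation: two candidate new vertices are glued precisely when the conditions (T) and (V), read off at level $n$, force them to be.

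The main obstacle, and the heart of the argument, is to show that this construction is consistent, i.e.\ that the prescribed equivalence relation really yields a simplicial covering and that (T), (V) actually hold at the new level. This is exactly where $8$--location enters: any potential inconsistency between two lifts of a vertex of $X$ above $p_n(\wt v)$ and $p_n(\wt u)$, when pushed inwards using (T) and (V) down to the preceding spheres, manifests as a dwheel in $X$ whose boundary length is at most $8$. By $8$--location, such a dwheel collapses into a $1$--ball, producing the required identification already inside $\tB_n$. The verification is a direct adaptation of the dwheel-chasing in Lemma~\ref{l:21} and in the cases analyzed in the proof of Theorem~\ref{t:hyp}, with local $5$--largeness ruling out degenerate configurations. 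Checking (T) and (V) at level $n$ is then a short case analysis of the same type.

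Setting $\wt X = \varinjlim \tB_n$ yields a flag simplicial complex with a simplicial covering $p : \wt X \to X$ satisfying $SD'(\wt{O})$ by construction. Proposition~\ref{p:sconn} gives that $\wt X$ is simply connected, hence is \emph{the} universal cover of $X$, and Proposition~\ref{p:cov} gives that it is $8$--located and locally $5$--large. The construction could have been run starting from any lift $\wt{O}' \in \wt X$ of any vertex of $X$; by uniqueness of the universal cover, it produces the same $\wt X$, so $SD'(\wt{O}')$ holds at every vertex and the full property $SD'$ is satisfied. Theorem~\ref{t:hyp} then yields $\delta$--hyperbolicity of $\wt X^{(1)}$ with a universal constant, completing the proof.
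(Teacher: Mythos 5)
Your proposal follows essentially the same approach as the paper: build $\wt X$ as an inductive union of balls $\tB_n$ with maps $p_n$ maintaining the SD' conditions, identify newly attached vertices via an equivalence relation whose well-behavedness is verified by pushing potential inconsistencies inward to produce a dwheel of boundary length at most $8$ (the paper's Lemma~\ref{l:equiv}), then invoke Propositions~\ref{p:cov}, \ref{p:sconn} and Theorem~\ref{t:hyp} together with uniqueness of the universal cover to upgrade $SD'(\wt{O})$ to $SD'$. One small imprecision: the new vertices attached over $\wt v \in \wt S_n$ correspond to $z \in X_{p_n(\wt v)}$ not lying in the image of the $1$--ball around $\wt v$, rather than to $z$ outside $B_n(O,X)$ globally (the covering map need not be globally injective), but this does not affect the correctness of the outline.
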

\begin{proof}
The proof follows closely --- up to much of the notations --- the proof of the analogous Theorem 4.5 from \cite{O-cnpc} (compare
also the proof of \cite[Theorem 1]{BCCGO}).
We construct the universal cover $\widetilde X$ of $X$ as an increasing
union $\bigcup_{i=1}^{\infty}\widetilde B_i$ of combinatorial balls. The covering map is then the union
$$
\bigcup_{i=1}^{\infty}f_i\colon \bigcup_{i=1}^{\infty}\widetilde B_i \to X,
$$
where
$f_i \colon \widetilde B_i \to X$ is locally injective and $f_i |_{\widetilde B_j}=f_j$, for $j\leqslant i$.

We proceed by induction. Choose a vertex $O$ of $X$. Define $\wt B_0=\{ O \}$, $\widetilde B_1=B_1(O,X)$ and $f_1=\mr {Id}_{B_1(O)}$.
Assume that we have constructed the balls $\widetilde B_1,\widetilde B_2,\ldots,\widetilde B_i$ and
the corresponding maps $f_1,f_2,\ldots,f_i$ to $X$ so that the following conditions are satisfied:
\medskip

\noindent
\begin{description}
  \item[($P_i$)] $\wt B_j=B_j(O,\wt B_i)$ for $j=1,2,\ldots,i$;
  \item[($Q_i$)] $\wt B_i$ satisfies the property $SD'_{i-1}(O)$;
  \item[($R_i$)] $f_i|_{B_1(\wt w,\wt B_i)}\colon B_1(\wt w,\wt B_i) \to B_1(f_i(\wt w),X)$ is
an isomorphism onto the span of the image for $\wt w\in \wt B_{i}$ and it is
an isomorphism for $\wt w\in \wt {B}_{i-1}$.
\end{description}

Observe that those conditions are satisfied for $\wt B_1$ and $f_1$, i.e.\ that conditions ($P_1$), ($Q_1$) and ($R_1$) hold.
Now we construct $\widetilde{B}_{i+1}$ and the map
$f_{i+1}\colon \widetilde{B}_{i+1}\to X$. For a simplex $\wt {\sigma}$ of
$\wt B_i$, we denote by $\sigma$ its image $f_i(\wt {\sigma})$ in $X$.
Let $\wt S_i=S_i(O,\wt B_i)$ and let
\begin{align*}
Z=\{ (\widetilde w,z)\in \widetilde S_i^{(0)}\times X^{(0)}|\; \; z\in X_{w}\setminus
f_i((\widetilde B_i)_{\widetilde{w}}) \}.
\end{align*}
Define a relation $\stackrel{e}{\sim}$ on $Z$ as follows:
\begin{align*}
(\widetilde{w},z)\stackrel{e}{\sim} (\widetilde w',z') \; \; \mr{iff} \; \; (z=z' \;\; and \;\; \langle \widetilde{w},\widetilde w'\rangle \in \wt B_i^{(1)}).
\end{align*}
The transitive closure $\esc$ of the relation $\es$ will be further used to define $\tB_{i+1}$. The following lemma
shows that $\esc$ is not ``too far" from $\es$.

\begin{lemma}
\label{l:equiv}
If $(\wt{w}_1,z)\es (\wt{w}_2,z) \es (\wt{w}_3,z)\es (\wt{w}_4,z)$ then there is $(\tx,z) \in Z$ with  
$(\tw_1,z) \es (\tx,z) \es (\tw_4,z)$.
\end{lemma}
\begin{proof}
If $\tw_j = \tw_k$, for $j\neq k$, or if $\la \tw_1,\tw_3 \ra \in \tB_{i}$, or 
$\la \tw_1,\tw_4 \ra \in \tB_{i}$, or $\la \tw_2,\tw_4 \ra \in \tB_{i}$, then the assertion trivially holds. 
Thus further we assume this is not the case.
By ($P_i$) and ($Q_i$), in $\tB_{i-1}$ there are vertices $\tu_1,\tu_2,\tu_3$ with $\tu_j \sim \tw_j, \tw_{j+1}$.
\medskip

\noindent
{\bf Claim. } For $j\neq k$, we have $\tu_j\neq \tu_k$. Furthermore, for $j=1,2$, $\tu_j \nsim \tw_{j+2}$ and
$\tw_j \nsim \tu_{j+2}$.

\medskip

\noindent
\emph{Proof of Claim. }
We show only that $\tu_1 \nsim \tw_3$. Other assertions follow in the same way.
By contradiction --- assume that $\tu_1 \sim \tw_3$. Observe that $z\nsim u_1$, by ($R_i$) and by the definition of the set $Z$, and that 
$w_1 \nsim w_3$, by ($R_i$) and our initial assumptions.
Therefore, by ($R_i$) and by our definition of $Z$ there is a $4$--wheel $(w_2;w_1,z,w_3,u_1)$ in $X$ --- contradiction 
with $5$--largeness. This finishes the proof of Claim.
\hfill $\square$
\medskip

By ($Q_i$), in $\tB_{i-1}$ there are vertices $\tu_1',\tu_2'$ with $\tu_j'\sim \tu_j,\tw_{j+1},\tu_{j+1}$, and $\tu_j'\neq \tu_j$, by Claim. 
When $\tu_j \sim \tu_{j+1}$ then $\tu_j'=\tu_{j+1}$. We may assume that $\tu_1'\nsim \tw_1$ --- if not
then we take $\tu_1'$ instead of $\tu_1$. Further, we may assume that if $\tu_1' \neq \tu_2$ then $\tu_1' \nsim \tw_3$ ---
if not than we choose $\tu_1'$ instead of $\tu_2$. By the definition of the set $Z$ and by ($R_i$), it follows that in $X$
there is a wheel spanned by vertices $w_2,z,w_3,u_2,u_1',u_1,w_1$. Furthermore, if $\tu_2' \neq \tu_2$ then we may assume that 
$\tu_2' \nsim \tw_4$ --- otherwise we replace $\tu_3$ by $\tu_2'$. It may however happen that $\tu_2'\sim \tw_2$. Then, by the definition
of the set $Z$ and by ($R_i$), in $X$ there is a $5$--wheel $(w_3;u_2',w_2,z,w_4,u_3)$. If  $\tu_2'\nsim \tw_2$ then we have 
a wheel spanned by vertices $w_3,u_2',u_2,w_2,z,w_4,u_3$. Since $u_2\sim u_2'$, in any case we obtain in $X$ a dwheel
$dW$ with the boundary length at most $8$. Therefore, by $8$--location, there is a vertex $y$ with  $dW \subseteq B_1(y,X)$. By ($R_i$) applied to the vertex $\tu_2$ there is a vertex $\ty \sim \tu_2$, with $f_i(\ty) =y$.
Again by ($R_i$), we have that all the vertices $\tw_1,\tw_2,\tw_3,\tw_4,\tu_1,\tu_2,\tu_3,\tu_1',\tu_2'$ are adjacent to
$\ty$ in $\tB_i$. Hence, to prove the lemma it is enough to show that $(\ty,z)\in Z$.
If not then, by ($R_i$), there is $\tz \in \tB_i$ such that $\la \tz,\ty \ra \in \tB_i$. By ($R_i$), we have then that $\la \tz,\tw_1 \ra \in \tB_i$, which is a contradiction, since $(\tw_1,z)\in Z$. This proves the lemma.
\end{proof}

\medskip

Observe that, by Lemma~\ref{l:equiv}, if $(\tu,z)\esc (\tw,z)$ then there is a vertex $\ty \in \tS_i$ with $(\ty,z)\in Z$ and
$\la \ty,\tu \ra, \la \ty,\tw \ra \in \tB_i$.

We define the flag simplicial complex $\wt{B}_{i+1}$ in the following way.
Its $0$--skeleton is by definition the set
$\wt{B}_{i+1}^{(0)}=B_{i}^{(0)}\cup (Z/\esc)$.
Now we define the $1$--skeleton $\wt{B}_{i+1}^{(1)}$ of $\wt{B}_{i+1}$ as follows.
Edges between vertices of $\wt B_i$ are the same as in $\wt B_i$.
Moreover, for every $\wt w\in \wt{S_i}^{(0)}$, there are edges joining $\wt {w}$ with $[\wt {w},z] \in Z/\esc$ (here
$[\wt {w},z]$ denotes the equivalence class of $(\wt {w},z)\in Z$) and
the edges joining $[\wt {w},z]$ with $[\wt {w},z']$, for $\langle z,z'\rangle\in X$.
Having defined $\wt{B}_{i+1}^{(1)}$ the higher dimensional skeleta are
determined by the flagness property.
\medskip

Definition of the map $f_{i+1}\colon \wt{B}_{i+1}^{(0)}\to X$ is clear:
$f_{i+1}|_{\wt B_i}=f_i$ and $f_{i+1}([\wt {w},z])=z$.
We show that it can be simplicially extended.
It is enough to do it for simplices in $\wt{B}_{i+1}\setminus \wt {B}_{i-1}$.
Let
$\wt {\sigma}=\langle[\wt w_{1},z_1],\ldots,[\wt w_{l},z_l],\wt w_1',\ldots,$ $\wt w_m'\rangle\in \wt{B}_{i+1}$ be a simplex.
Then, by definition of edges in $\wt{B}_{i+1}$, we have that
$\langle z_p,z_q\rangle\in X$ and $\langle z_r,w_s'\rangle\in X$, for $p,q,r\in \{ 1,2,\ldots,l\}$
and $s\in \{ 1,2,\ldots,m \}$.
Since $f_{i+1}([\wt w_p,z_p])=z_p$, $f_{i+1}(\wt w_s')=w_s'$ and
since $f_i$ was simplicial, it follows that
$\langle \{ f_{i+1}(\wt {w})|\;\; \wt {w}\in \wt {\sigma}\} \rangle \in X$. Hence, by the simplicial extension, we can define the map $f_{i+1}\colon \wt{B}_{i+1}\to X$.
\medskip

Now we check that $\wt{B}_{i+1}$ and $f_{i+1}$ satisfy conditions ($P_{i+1}$), ($Q_{i+1}$) and ($R_{i+1}$).
The proof of the conditions ($P_{i+1}$) and ($R_{i+1}$) is exactly the same as the one in the proof of Theorem 4.5 in \cite{O-cnpc} (even
up to the notations; see
page 12 there).
\medskip

\noindent
{\bf Condition} ($Q_{i+1}$).
By the condition ($Q_i$) it is enough to verify the triangle condition (T) and the vertex condition (V) from Definition~\ref{d:sd}
only for, respectively, edges and vertices in $\tS_{i+1}$. 
By the definition of edges in ${\tS_{i+1}}$ it is clear that
$(\tB_{i+1})_{e}\cap \wt{B_i}$ is non-empty, for an edge $e\in \tS_{i+1}$. Therefore, the condition (T) is satisfied.
The vertex condition (V), for a vertex $[\tw,z]\in \tS_{i+1}$ follows immediately from the definition of edges in $\tB_{i+1}$,
and from Lemma~\ref{l:equiv}.

\medskip

Having established conditions ($P_{i+1}$), ($Q_{i+1}$) and ($R_{i+1}$) we are able to construct a complex
$\wt X=\bigcup_{i=1}^{\infty} \wt B_i$ and a map
$f=\bigcup_{i=1}^{\infty} f_i\colon \wt X \to X$ with the following properties.
The complex $\wt X$ satisfies the property $SD'_n(O)$ for every $n$ and the map $f$ is a covering map. 
Thus, by Proposition~\ref{p:cov}, the cover $\tX$ is $8$--located and locally $5$--large.
From Proposition~\ref{p:sconn} it follows that $\tX$ is simply connected and therefore $\tX$ is the universal cover of $X$.
Since the vertex $O$ was chosen arbitrarily in our construction and since the universal cover of $X$ is unique it follows that 
$\tX$ satisfies the property $SD'_n(O)$ for every vertex $O$ and for every natural number $n$.
This finishes the proof of the theorem.
\end{proof}

\section{$5/6^*$--triangulations are $8$--located}
\label{s:568}
The goal of this section is to show that $5/6^*$--triangulations are $8$--located locally $5$--large simplicial complexes, i.e.\ 
to prove Theorem B from Introduction. 

\subsection{$5/6^*$--triangulations of a $2$--sphere.}
\label{s:56}
A link of a vertex in a $5/6^*$--triangula\-tion of a $3$-manifold is a \emph{$5/6^*$--triangula\-tion of a $2$--sphere}, that is, 
each vertex has degree (the number of edges adjacent to) $5$ or $6$ and no two
vertices of degree $5$ are connected by an edge. A \emph{soccer tiling of a $2$--sphere} is the cellulation dual to a $5/6^*$--triangulation of 
a $2$--sphere --- see \cite{EMcCM}. Its cells --- pentagons and hexagons --- correspond to vertices of the triangulation, its vertices 
correspond to triangles. A \emph{soccer diagram} is a subcomplex of a soccer tiling, which is homeomorphic to a
disc. A path in the boundary of a soccer diagram which is contained in a single cell is called an \emph{exposed path}.

\subsection{$8$--location}
\label{s:s568}
Throughout this subsection we assume that $X$ is a $5/6^*$--triangula\-tion of a $3$--manifold.

\begin{lemma}
\label{l:wheels}
$X$ is locally $5$--large and, for $k=5,6$, every $k$--wheel $W$ in $X$ is contained in a link of a vertex $v\notin W$. 
\end{lemma}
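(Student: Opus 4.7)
The plan is to work inside the vertex link $\mathrm{lk}(v_0) \subset X$, which by Subsection~\ref{s:56} is a $5/6^*$-triangulation of $S^2$: every vertex has degree $5$ or $6$, and no two degree-$5$ vertices are adjacent. Since edge links in $X$ are $5$- or $6$-cycles (automatically $5$-large) and higher links are trivial, local $5$-largeness reduces to the assertion that no full $4$-cycle exists in $\mathrm{lk}(v_0)$, while the wheel statement reduces to the assertion that every full $k$-cycle in $\mathrm{lk}(v_0)$ (for $k \in \{5,6\}$) bounds a $k$-wheel on at least one of its two sides. In the latter case, the apex $w$ of that $k$-wheel is a vertex of $\mathrm{lk}(v_0)$ adjacent in $X$ to $v_0$ and to every $v_i$, distinct from each vertex of $W$, so that $w \notin W$ and $W \subset \mathrm{lk}(w)$ as the lemma requires.

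The main computational input is a Euler-formula count on a disk $D \subset S^2$ bounded by the cycle. With $n$ interior vertices, $n_5$ of them of degree $5$ in the ambient sphere, and $I$ interior-interior edges, $V_D - E_D + F_D = 1$ combined with the degree sum yields the identity
\[
n_5 + I \;=\; 3n + 3 - k.
\]
For $k \in \{5,6\}$, the case $n = 1$ forces $n_5 = 6 - k$ and $I = 0$, so the unique interior vertex has degree $k$ and is adjacent to every boundary vertex --- exactly the $k$-wheel we are after. For $k = 4$ even $n = 1$ is infeasible (it would require $n_5 = 2 > n$), and $n = 0$ is ruled out by fullness of the cycle. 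To rule out every $n \geq 2$, the plan is to combine (a) $n_5 \leq n$, (b) the planarity bound $I \leq 3n - 6$ (for $n \geq 3$), and (c) the $5/6^*$-independence of the degree-$5$ vertices inside the interior subgraph, so that by Kuratowski the subgraph contains neither $K_5$ nor $K_{3,3}$. For $k = 4$ this kills every $n \geq 1$ and yields local $5$-largeness; for $k \in \{5,6\}$ it kills each small $n \in \{2,3,\ldots,8\}$ on a single side.

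The hard part will be configurations of a full $k$-cycle in which \emph{both} sides carry many interior vertices simultaneously, since the local arithmetic of the previous paragraph leaves room for such configurations. Here the plan is to bring in the global identity $V_5 = 12$ valid for every $5/6^*$-triangulation of $S^2$ (a direct consequence of Euler on the whole sphere), together with the soccer-diagram defect/curvature counting in the style of \cite{EMcCM}, which should force one of the two sides of the cycle to collapse to $n = 1$. Once $n = 1$ is secured on that side, the unique interior vertex $w$ there plays the role of the vertex $v$ in the statement of the lemma, completing the proof.
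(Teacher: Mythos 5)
Your reduction to full $k$--cycles in the vertex link, the identification of the wheel apex when $n=1$, and the Euler identity $n_5 + I = 3n + 3 - k$ are all correct, and this is a genuinely different route from the paper's. The paper passes to the dual soccer tiling, takes the shorter of the two boundary paths $\alpha_{in}$ around the chain of $k$ cells, invokes \cite[Remark 2.9]{EMcCM} to get a soccer diagram with at most $6$ pentagons bounded by $\alpha_{in}$, and then uses the structural classification \cite[Theorem 3.10]{EMcCM} (existence of an exposed path of length $4$) to collapse that diagram to a single cell, whose dual vertex is the required apex. A primal Euler count, if it worked, would be more self-contained than importing that machinery. But your count, as stated, does not close the argument.

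The first gap is in your claim that for $k=4$ the bounds (a) $n_5 \leq n$, (b) $I \leq 3n-6$, and (c) independence of the degree--$5$ interior vertices ``kill every $n \geq 1$.'' From $n_5 + I = 3n-1$, (a) and (b) alone give $3n-1 \leq 4n-6$, which only rules out $n \leq 4$. Even after refining (c) into a planar-bipartite bound ($\leq 2n-4$ edges between the two degree classes) plus a planar bound on the $n-n_5$ non-independent vertices, one only obtains $n_5 \leq n-5$; combined with $n_5 \geq 5$ from (b) this leaves $n \geq 10$ entirely open. So local $5$--largeness is not established by what you cite, and you would need some further structural input (e.g., peeling off the outer annulus of the disk and inducting, or genuinely using the second side) to finish even the ``easy'' $k=4$ case.

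The second, deeper gap is that for $k=5,6$ the lemma requires $n=1$ on \emph{one} of the two sides, and you acknowledge that the local arithmetic cannot decide which. Your plan for this — ``bring in $V_5=12$ together with the soccer-diagram defect/curvature counting in the style of \cite{EMcCM}, which should force one of the two sides to collapse'' — is a placeholder, not an argument, and what it gestures at is essentially the paper's own proof of Lemma~\ref{l:cycles}. As written, the proposal sets up a promising alternative framework but proves neither half of the statement.
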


The lemma above follows immediately from Lemma~\ref{l:cycles} below. The latter is an elementary result concerning $5/6^*$--triangulations 
of a $2$--sphere. For brevity, in the proof we use results from \cite[Section 3]{EMcCM}.
\begin{lemma}
\label{l:cycles}
Let $Y$ be a $5/6^*$--triangulation of a $2$--sphere. There are no full $4$--cycles in $Y$, and, for $k=5,6$, 
every full $k$--cycle in $Y$ is contained in a wheel. 
\end{lemma}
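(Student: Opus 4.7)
The plan is to take a full $k$-cycle $\gamma=(v_1,\ldots,v_k)$ with $k\in\{4,5,6\}$ and analyze the smaller of the two disks $D\subset S^2$ that it bounds. Euler's formula for $D$ combined with the triangle--edge relation $3F=2E-k$ gives $F=2V_i+k-2$ and $E=3V_i+2k-3$, where $V_i$ is the number of interior vertices of $D$. I first dispose of the small cases. If $V_i=0$, then $D$ is a triangulated $k$-gon with $k-3\geq 1$ diagonals among its boundary vertices, contradicting the fullness of $\gamma$. If $V_i=1$, then $D$ is a $k$-wheel around a single interior vertex $v_0$ whose degree in $Y$ equals $k$; since every vertex of $Y$ has degree $5$ or $6$, this rules out $k=4$ and exactly produces the desired wheel for $k\in\{5,6\}$.

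The remaining case $V_i\geq 2$, which must be handled for every $k\in\{4,5,6\}$, is the main one. My approach is to pass to the dual soccer diagram $\Delta$ of $D$, the subcomplex of the soccer tiling whose cells correspond to the vertices of $D^{(0)}$. A direct case analysis on which triangles of $Y$ give rise to which boundary vertices of $\Delta$ shows that the degree-$3$ boundary vertices of $\Delta$ correspond bijectively to the $\gamma$-edges, so $V_3(\Delta)=k$, while the degree-$2$ boundary vertices sit along the exposed arcs of the boundary cells. Writing $m_i'$ for the number of exposed edges of the cell dual to $v_i$ (which is positive by fullness of $\gamma$ together with $k\geq 4$), one obtains $V_2(\Delta)=\sum_i(m_i'-1)$. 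Combinatorial Gauss--Bonnet for a disk in the soccer tiling, which assigns curvature $\pi/3$ to each pentagonal cell and turning angle $\pm\pi/3$ to boundary vertices of degrees $2$ and $3$, then yields $F_5(\Delta)+V_2(\Delta)-V_3(\Delta)=6$, so the total boundary length of $\Delta$ is $\sum_i m_i'=2k+6-F_5(\Delta)$.

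The hard part is extracting a contradiction from $V_i\geq 2$, as pure Gauss--Bonnet is not sharp enough. Here I would invoke the structural analysis of soccer diagrams from \cite[Section 3]{EMcCM}. The $5/6^*$-condition forbids two pentagons of $\Delta$ from sharing an edge, and $V_3(\Delta)=k\leq 6$ forces $\partial\Delta$ to decompose into at most six exposed paths, each contained in a single pentagonal or hexagonal cell. Combining these constraints with the identity above pins $\Delta$ down to one of the explicit small soccer diagrams catalogued in \cite{EMcCM}, each of which is incompatible with $V_i\geq 2$. This yields the desired contradiction and completes the proof.
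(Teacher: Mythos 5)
Your treatment of the $V_i=0$ and $V_i=1$ cases via Euler's formula in the primal triangulation is a clean, elementary opening that the paper does not use (the paper works entirely in the dual soccer tiling from the start, considering the chain of cells dual to $\gamma$ and the two boundary paths $\alpha_{in}$, $\alpha_{out}$ of that chain). Up to that point your route is genuinely different and, if anything, slightly nicer: $V_i=1$ immediately produces the wheel, and the degree constraint immediately kills $k=4$ in that case.

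The problem is the $V_i\geq 2$ case, which is the entire content of the lemma beyond those two easy subcases, and your argument there does not actually close. You correctly note that Gauss--Bonnet alone ($F_5(\Delta)+V_2(\Delta)-V_3(\Delta)=6$) is insufficient --- hexagons carry zero curvature, so this identity does not bound the size of the interior of $\Delta$ at all --- and then you assert that the constraints ``pin $\Delta$ down to one of the explicit small soccer diagrams catalogued in \cite{EMcCM}, each of which is incompatible with $V_i\geq 2$.'' That is not a proof, and the assertion itself is dubious: the EMcCM catalogue (their Figure 2) consists of two specific diagrams arising as the terminal case of a particular structural dichotomy (their Theorem 3.10: a soccer diagram with few pentagons either has an \emph{exposed path of length $4$} or is one of those two diagrams); it is not a classification of all soccer diagrams with $V_3\leq 6$. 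There are in fact soccer diagrams with $V_3\leq 6$ and arbitrarily many interior hexagons, so the list of constraints you have accumulated does not pin down $\Delta$ to a finite list. What the paper actually does at this juncture is use the exposed-path-of-length-$4$ alternative from \cite[Theorem 3.10]{EMcCM} to produce a single cell $d$ touching four consecutive $c_i$'s, which either already gives the wheel or shortens the cycle, setting up an induction ($k=6 \to k=5$, and $k=5,4 \to$ contradiction because the filling cell and $d$ would both be pentagons sharing an edge). None of this shortening/induction mechanism appears in your proposal, and it is precisely the missing engine. As written, the $V_i\geq 2$ case --- and hence the nonexistence of full $4$-cycles and the $k=5,6$ conclusion --- is unproved.
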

\begin{proof}
Let $k\in \{ 4,5,6 \}$ and assume there is a full  $k$--cycle $c=(c_1,c_2,\ldots,c_k)$ in $Y$. In the soccer tiling $Y'$ corresponding to $Y$,
we have then the chain of cells $c_1,c_2,\ldots,c_k$, with $c_i$ and $c_{i+1}$ sharing an edge (here and further we use the convention
$c_{k+j}=c_j$). By $c$ we denote the subcomplex of $Y'$ being the union of $c_1,c_2,\ldots,c_k$. 
Observe that if, for $k-1\geqslant j\geqslant 2$, $c_i$ and $c_{i+j}$ have an edge in common then $c$ (in $Y$) is not full, thus further we assume 
this is not the case. Consider now the two \emph{boundary paths} $\ain$ and $\aou$ of $c$ in $Y'$ --- see Figure~\ref{f:6cycle}.
Since $k\leqslant 6$ and cells are pentagons or hexagons, the sum of lengths $|\ain |+|\aou|$ is at most $6\cdot 4 =24$.
Therefore, without loss of generality we assume that $|\ain|\leqslant 12$.
\begin{figure}[h!]
\centering
\includegraphics[width=0.7\textwidth]{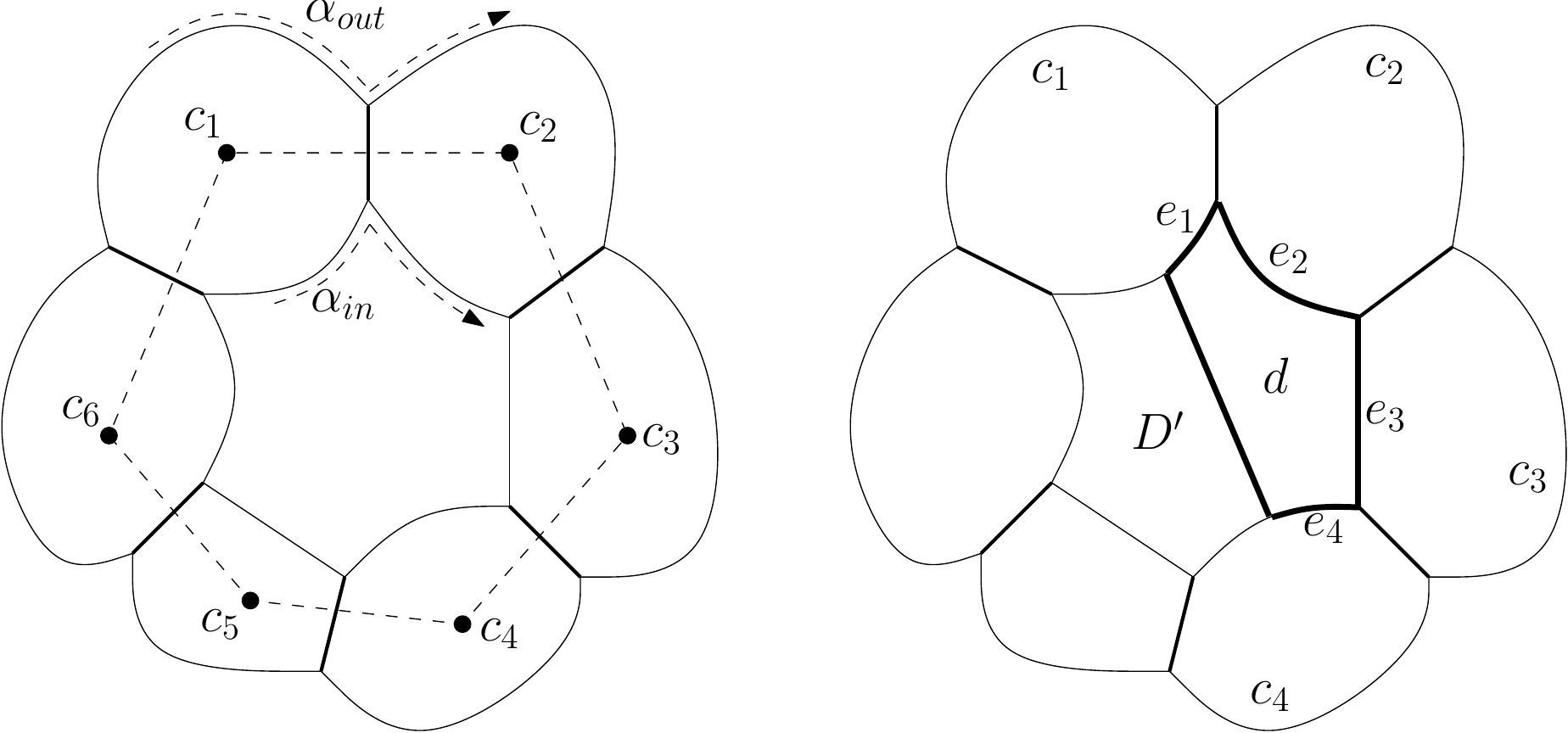}
\caption{Proof of Lemma~\ref{l:cycles} for $k=6$: the cycle $c$ and the paths $\alpha_{in}, \alpha_{out}$ (left); the exposed path in $D$ (right).}
\label{f:6cycle}
\end{figure}
Consider a soccer diagram $D$ with boundary $\ain$ that has at most $6$ pentagons --- such a diagram always 
exists by \cite[Remark 2.9]{EMcCM}. By \cite[Theorem 3.10]{EMcCM}, either $D$ contains an exposed path of length $4$, or 
$D$ is one of the two diagrams in \cite[Figure 2]{EMcCM}. It is easy to see that the latter is impossible.
Therefore, for the rest of the proof we suppose that there is an exposed path of length $4$ in $D$. 
This is impossible when $k=4$, hence further we consider $k=5,6$.
If, for some $i\in \{1,2,\ldots,k\}$, the cell $c_i$ belongs to $D$ (i.e.\ $D$ is ``outside $\ain$" in Figure~\ref{f:6cycle}) then
$c_{i-1}$ (or $c_k$ if $i=1$) and $c_{i+1}$ share an edge, contrary to our assumptions. Therefore $D$ does not contain 
any $c_i$ (i.e.\ $D$ is ``inside $\ain$" in Figure~\ref{f:6cycle}).
Without loss of generality we assume that the exposed path $p$ of length $4$ consists of edges $e_1,e_2,e_3,e_4$ such that $e_i
\subseteq c_i$. Let $d$ be a cell of $D$ containing $p$.
If $D=d$ then the lemma is proved, since then the cycle $c$ (in $Y$) is contained in the $k$--wheel $(d;c_1,\ldots,c_k)$ in $Y$.
Therefore, further we assume that $D=D' \cup d$, where $D'$ is a new (nonempty) soccer diagram. 
Assume that among the cells $c_i$ only the cells $c_1,c_2,\ldots,c_r$ intersect $d$, for $4\leqslant r \leqslant k-1$.
Then we obtain a new cycle $c'$ of cells: $c_1,d,c_r,\ldots,c_k$, of length strictly smaller than $k$. This corresponds to a full cycle $c'$ in $Y$.
If $k=5$ then this is impossible as shown above. If $k=6$ then $c'$ has to have length $5$ and, as we have just proved $c'$ is contained
in a wheel $(d';c_1,d,\ldots)$. This means that both $d$ and $d'$ are pentagons, which contradicts the definition of the soccer diagram,
and thus yields the lemma.
\end{proof}

\begin{lemma}
\label{l:7wheels}
Let $W=(v;c_1,c_2,\ldots,c_7)$ be a $7$--wheel in $X$. Then there exist vertices $y\neq z$ such that $\la v,y,z \ra \in X$, and, 
up to renaming cyclically vertices $c_i$, the vertex $y$ is adjacent to $c_1,c_2,c_3,c_4$, and $z$ is adjacent to
$c_4,c_5,c_6,c_7,c_1$. 
\end{lemma}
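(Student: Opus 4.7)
My plan is to work inside the link $X_v$, which by the $5/6^*$--hypothesis is itself a $5/6^*$--triangulation of $S^2$ (Subsection~\ref{s:56}). By Lemma~\ref{l:wheels}, $X_v$ is $5$--large, so the $7$--cycle $(c_1,\ldots,c_7)$ is full in $X_v$ and bounds two closed disks $D_+$ and $D_-$. Since $v\sim u$ for every $u\in X_v$, producing $y\neq z$ with $\langle v,y,z\rangle\in X$ reduces to producing two adjacent vertices $y, z$ in $X_v$ with the prescribed adjacencies to the $c_i$; these will be interior vertices of one of $D_\pm$.

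I dualize $X_v$ to its soccer tiling: the $7$--cycle becomes a ring of $7$ pentagonal/hexagonal cells flanked by two soccer subdiagrams $R_+$ and $R_-$, whose cells are dual to the interior vertices of $D_+$ and $D_-$ respectively. Since $X_v$ has exactly $12$ pentagons (degree--$5$ vertices) and these distribute between the ring and $R_\pm$, without loss of generality $R_+$ has at most $6$ pentagons. A short Euler--characteristic count --- fullness of the cycle forbids chords, and $X_v$ has no vertex of degree $\geq 7$ --- shows $|R_+|\geq 2$. Applying \cite[Theorem~3.10]{EMcCM} to $R_+$, either $R_+$ is one of the two exceptional diagrams of \cite[Figure~2]{EMcCM}, or it contains an exposed path of length $\geq 4$.

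The exceptional cases are eliminated as in the proof of Lemma~\ref{l:cycles}: their structure, glued to the ring of $7$ cells, forces a chord $c_{i-1}\sim c_{i+1}$ (contradicting fullness) or violates the $5/6^*$--condition. So some cell $d\in R_+$ carries an exposed path of length $\geq 4$, meaning $d$ shares an edge with at least $4$ consecutive ring cells, say $c_1,c_2,c_3,c_4$ after cyclic relabeling. When the exposed path has length exactly $4$, I set $y := d$: viewed as a vertex of $X_v$, $y$ is adjacent to $c_1,c_2,c_3,c_4$. To produce $z$, I examine the residual diagram $R_+\setminus\{d\}$, whose boundary is a short cycle consisting of the $R_+$--side edges along $c_4,c_5,c_6,c_7,c_1$ together with the one or two interior edges of $d$ (depending on whether $d$ is a pentagon or a hexagon). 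A further application of \cite[Theorem~3.10]{EMcCM} to this residual --- whose pentagon count and boundary length both remain small --- gives a cell $z$ that shares an edge with $d$ and is adjacent to each of $c_4,c_5,c_6,c_7,c_1$. When the exposed path on $d$ has length $\geq 5$, I instead set $z := d$ and locate $y$ by the symmetric argument.

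The main obstacle I anticipate is this residual analysis: I must rule out configurations where $R_+\setminus\{d\}$ contains several cells yet no single one covers all five of $c_4,c_5,c_6,c_7,c_1$ while also touching $d$. I expect this will require careful bookkeeping of how each ring cell's $R_+$--side edges are distributed among residual cells, together with repeated use of the fullness of the $7$--cycle and the $5/6^*$--condition (no two adjacent pentagons) to exclude the parasitic configurations.
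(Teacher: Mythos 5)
Your setup (dualize to the soccer tiling of $X_v$, choose the side of the ring with at most $6$ pentagons, invoke \cite[Theorem~3.10]{EMcCM} to find a cell $d$ carrying an exposed path of length $4$) matches the paper up to this point. But you then try to produce $z$ by a second direct analysis of the residual diagram $R_+\setminus\{d\}$, and you explicitly flag the combinatorics of that residual as your ``main obstacle.'' That obstacle is real, and you have not closed it: there is no reason offered why the residual must consist of a single cell, or why a single residual cell must touch all of $c_4,c_5,c_6,c_7,c_1$ and $d$; enumerating the possible distributions of side-edges among several residual cells is exactly the bookkeeping you say you expect to need but do not carry out.

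The paper's proof avoids this entirely with one observation you are missing: once $d$ is attached along $c_1,\ldots,c_4$, the cells $c_1,d,c_4,c_5,c_6,c_7$ form a new \emph{cycle of cells} in the soccer tiling (equivalently, a full cycle of length at most $6$ back in $X_v$), and Lemma~\ref{l:cycles} --- which you already have --- says precisely that such a cycle is contained in a wheel, i.e.\ there is a single cell $f$ dual-adjacent to every cell of the new cycle. Taking $(y,z)=(d,f)$ (or $(f,d)$, depending on the length of the exposed path) finishes the lemma in one stroke. In short: rather than re-running the Elder--McCammond--Meier machinery on the residual, you should recognize the shorter cycle and feed it back into Lemma~\ref{l:cycles}. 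As written, your argument is a sound plan for the first half but leaves the second half genuinely open.

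Two minor points. Your pentagon-counting step (``the $12$ pentagons of $X_v$ distribute, so WLOG $R_+$ has at most $6$'') is not quite the paper's route --- the paper bounds $|\alpha_{in}|\leq 14$ and then invokes \cite[Remark~2.9]{EMcCM} to \emph{choose} a diagram with at most $6$ pentagons; your version needs care because ring cells may themselves be pentagons, which muddies the partition. And the exclusion of the two exceptional diagrams of \cite[Figure~2]{EMcCM} is asserted by analogy with Lemma~\ref{l:cycles}, which is acceptable but should at least be checked for the $7$--cell ring rather than the $\leq 6$--cell rings treated there.
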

\begin{proof}
The first part of the proof is the same as in the proof of Lemma~\ref{l:cycles} above: We consider the ``cycle" of cells
$c_1,\ldots, c_7$ in the soccer tiling corresponding to $X_v$, and we find a path $\alpha_{in}$ of length at most
$7\cdot 4 /2=14$. Considering a soccer diagram $D$ with the boundary $\alpha_{in}$ and with at most $6$ pentagons, 
we conclude that $D$ does not
contain any $c_i$, and that there is an exposed 
path of length $4$. It is not possible that $D$ consists of a single cell, so that the cell $d$ containing the exposed path forms
a new cycle of less than $7$ cells. For the new cycle we apply Lemma~\ref{l:cycles}, to obtain a single ``filling" cell $f$.
The pair of cells $(d,f)$ corresponds to the pair of required vertices: $(y,z)$ or $(z,y)$.
\end{proof}

\medskip

\noindent
\emph{Proof of Theorem B. } 
Assume that $dW=(v_0,v_0',w;v_1,\ldots, v_{k-2};v_1',\ldots,v_{l-2}')$ is a $(k,l)$--dwheel in $X$ with the boundary length at most $8$. 
Without loss of
generality we assume that $k\geqslant l$. By Lemma~\ref{l:wheels}
the only possibilities for the pair $(k,l)$ are: $(5,5),(6,5),(6,6),(7,5)$. By Lemma~\ref{l:wheels}, there is a vertex $\ovv' \neq v_0'$
such that $(v_0';v_1',\ldots, v_{l-2}',w,v_0) \subseteq X_{\ovv'}$.

First we consider the case when $k\leqslant 6$. By Lemma~\ref{l:wheels}, there is a vertex $\ovv \neq v_0$
such that the wheel $(v_0;v_1,\ldots, v_{k-2},w,v_0') \subseteq X_{\ovv}$.
If $\ovv = \ovv'$ then $dW \subseteq B_1(\ovv)$ and we are done. Hence further we assume that $\ovv \neq \ovv'$. 
If $v_1=v_1'$ then $(v_1,\ovv,w,\ovv')$ is a $4$--cycle in $X_{\langle v_0,v_0' \rangle}$. By Lemma~\ref{l:wheels}, this cycle is not full and thus $\la \ovv,\ovv' \ra \in
X$. Hence, the vertices $v_0,v_0',v_1,\ovv,\ovv'$ span a $4$--simplex --- contradiction.
If $v_1 \neq v_1'$ then, by the definition of a dwheel $dW$, we have $v_1\sim v_1'$ and, by our assumptions, $l=5$.
In this case the cycle $(v_1,\ovv,w,\ovv',v_1')$ is a $5$--cycle in $X_{\langle v_0,v_0' \rangle}$. If this cycle is not full then we
have a contradiction as before. If it is full then the edge $\langle v_0,v_0' \rangle$ has degree $5$. However, also the edge
${\langle v_0',\ovv' \rangle}$ has degree $5$ contradicting the definition of the $5/6^{\ast}$--triangulation.

It remains to treat the case $(k,l)=(7,5)$. Observe that in this case we have $v_1=v_1'$. 
Let $y\neq z$ be vertices as in Lemma~\ref{l:7wheels} for the $7$--wheel 
$(v_0;v_1,\ldots, v_{k-2},w,v_0')$. Either $v_1,v_0',w$ are all adjacent to $y$ or to $z$, or, without loss of generality, we assume
that $\la z,w,v_0' \ra \in X$, and $ \la y,v_1,v_0' \ra \in X$. In the former case we proceed as above for $k\leqslant 6$.
Thus further we assume that $\la y,w \ra, \la z,v_1 \ra \notin X$. Then there is a $5$--cycle $(y,z,w,\ovv',v_1)$ in the link of
the edge $\la v_0,v_0' \ra$.
If this cycle is not full then we obtain a simplex of dimension above $3$ in $X$ --- contradiction. 
Therefore the cycle is full and hence the degree of  $\la v_0,v_0' \ra$ is $5$. This means however that there are two degree $5$ edges
in the triangle $\la v_0,v_0',\ovv' \ra$, contradicting the definition of the $5/6^{\ast}$--triangulation.
\hfill $\square$

\section{Further applications and final remarks}
\label{s:fin}

\subsection{Versions of $m$--location}
\label{s:new8loc}
There is another, ``more natural", version of $m$--location, that originated in fact our studies:
A flag (now, not necessarily $5$--large) simplicial complex is \emph{$m$--located} if every homotopically trivial
loop of length at most $m$ has a filling diagram with at most one internal vertex (see eg.\ \cites{JS,O-cnpc} for basics on filling diagrams). The new property does not imply the one considered in this paper, neither vice versa. 
The following analogue of Theorem A holds.
\begin{theorem*}
Let $X$ be a simply connected $8$--located flag simplicial complex. Then the $1$--skeleton of $X$, equipped with
the standard path metric, is Gromov hyperbolic.
\end{theorem*}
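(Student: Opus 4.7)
The approach mirrors the proof of Theorem A: one first establishes an appropriate global property for $X$, then deduces hyperbolicity via Papasoglu's thin-intervals criterion. Since $X$ is assumed simply connected, the local-to-global construction of Theorem~\ref{t:logl} can be bypassed and one works directly with $X$. The role played in the original argument by the assertion ``$dW$ is contained in a $1$-ball'' is taken over by ``the boundary loop of $dW$ has a filling with at most one interior vertex''; this is a weaker statement because it concerns only the boundary and not the interior vertices of the dwheel, so the adaptation is not entirely cosmetic.

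The first step is to derive, by induction on the radius $i$, triangle and vertex descent conditions of $SD'$ type on the spheres $S_i(O)$, for any basepoint $O$. Each inductive step would build a short homotopically trivial loop of length at most $8$ from a combination of a ``problematic'' edge or vertex in $S_{i+1}(O)$ together with previously-constructed neighbors in $B_i(O)$; the new $8$-location then provides a filling disk, and its unique interior vertex --- or a diagonal of the filling, when no interior vertex is present --- supplies the required common neighbor. Because local $5$-largeness is no longer assumed, the descent conditions should be formulated in a slightly weakened form that allows for $4$-wheels and for certain coincidences of vertices.

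The second step mirrors the proof of Theorem~\ref{t:hyp} verbatim up to the point where $8$-location is invoked: by Papasoglu's criterion it suffices to rule out the existence of a full path $v_1v_2v_3v_4$ of diameter $3$ in some $I_k$, and the same case analysis (Cases I, II, III, with subcases I(a), I(b), II(a), II(b)) produces in each case a short loop of length at most $8$. When the filling of this loop has one interior vertex, the contradiction with the distance $3$ of some vertex pair on the loop is immediate, exactly as in the original. When the filling has no interior vertex, the chord triangulation of the boundary cycle provides diagonals which can be used either to close up a strictly shorter loop (to which the argument recurses) or to yield directly an adjacency contradicting the geodesic-distance assumptions on the $v_j$ and $w_j$. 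The analogue of Lemma~\ref{l:21} is handled in the same way.

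The main obstacle is precisely this ``no interior vertex'' sub-case. In the original proof, the conclusion ``$dW \subseteq B_1(w)$'' provided a single vertex adjacent to all of the $v$'s, $w$'s, $u$'s appearing in the configuration, and the distance contradiction followed automatically. In the new setting one must enumerate the possible triangulations of the boundary $k$-gon (for $k\leqslant 8$) and verify that each, combined with the existing adjacencies in the configuration, produces an edge or short path between two vertices whose known distances in $X$ forbid it. The number of such triangulations is small and the distance constraints are severe, so the argument should go through, but this combinatorial bookkeeping --- rather than the high-level strategy --- is where the bulk of the technical work will lie.
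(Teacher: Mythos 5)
The paper offers no proof of this theorem: it is stated as an ``analogue of Theorem A,'' immediately followed by the remark that one should think in terms of ``short loops allow small filling diagrams,'' leading to ``the local-to-global principle for the linear Dehn function.'' The intended argument therefore almost certainly runs through a linear isoperimetric inequality rather than through the $SD'$/Papasoglu machinery of Theorem~\ref{t:hyp}, which is what you propose to adapt. So your route is genuinely different from the one the author sketches, and there is no paper proof to compare against line by line; what I can do is assess whether your proposal would work.

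As written, it is a plausible strategy outline rather than a proof, and the two places you flag as requiring care are exactly where it currently fails to close. First, the descent argument: the proof of property $SD'$ in Theorem~\ref{t:logl} (via the construction of $\wt X$) uses local $5$-largeness essentially --- the Claim inside Lemma~\ref{l:equiv} derives its contradiction directly from $5$-largeness, and Lemma~\ref{l:21} invokes it repeatedly. Saying the conditions ``should be formulated in a slightly weakened form that allows for $4$-wheels'' names the problem but does not solve it; without a replacement argument the induction does not go through. Second, the ``no interior vertex'' sub-case. Here you defer to ``combinatorial bookkeeping,'' but what is actually needed is a structural observation you do not make: in each invocation of $8$-location in Lemma~\ref{l:21} and Theorem~\ref{t:hyp}, the boundary cycle is stratified by the spheres $S_j(O)$ (or the sets $I_j$), and the vertex on the top stratum (e.g.\ $v\in S_{n+1}$ in Lemma~\ref{l:21}) can be adjacent, in $X$, only to cycle vertices lying one stratum down. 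In Lemma~\ref{l:21} those are precisely its cycle neighbors $y,z$, which are assumed nonadjacent, so a filling with no interior vertex would force the edge $\langle y,z\rangle$ --- contradiction. Thus the filling must have one interior vertex, which dominates the entire boundary and restores the distance contradiction. An analogous layer argument is what is needed, case by case, for the dwheels arising in the proof of Theorem~\ref{t:hyp}; some of those cases are less immediate than Lemma~\ref{l:21} because the top-layer vertex may have an additional candidate diagonal, and those configurations must actually be checked. Until this is done the proposal has a genuine gap, not merely deferred bookkeeping.

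One small inaccuracy: you describe the filling-diagram version of $m$-location as ``weaker'' than the dwheel version. The paper states explicitly that neither implies the other --- the hypothesis of the new version is broader (all short null-homotopic loops, not just dwheel boundaries) even though its conclusion is softer, so the two notions are simply incomparable.
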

A more general scheme for local conditions implying Gromov hyperbolicity relies on the requirement that ``short loops allow 
small filling diagrams". This leads to particular instances of the local-to-global principle for the linear Dehn function. 

In the next subsections one may usually consider both versions of ``$m$--location" when it is mentioned.

\subsection{Further applications}
\label{s:furt}
The notion of $8$--location can be applied not only in the case of (closed) manifolds or manifolds with boundaries.
Having, a  $5/6^{\ast}$--triangulation of a $3$--manifold one can construct a branched cover that will still be an
$8$--located locally $5$--large complex. There are also various amalgamation procedures that can be applied to
obtain ``singular" spaces beginning with manifolds. Further, it is possible to construct $8$--located pseudomanifolds 
using constructions from \cite{JS} and similar (cf.\ e.g.\ \cite{O-cnpc} for further references). Objects obtained in this way
may be asymptotically different from the existing ones --- compare the next subsection.

Clearly, $8$--location applies to higher dimensional manifolds as well. Nevertheless, 
it is an interesting question, whether for $4$--manifolds (and in higher dimensions) one may formulate conditions on
a triangulation in the spirit of Thurston's condition, implying $8$--location, and thus hyperbolicity.

\subsection{Relations to other combinatorial nonpositive curvature conditions}
\label{s:other}

There are various notions of ``combinatorial nonpositive curvature" appearing in the literature --- see e.g.\ \cites{BCCGO,O-cnpc} for
more details on this. In particular, for $k\geqslant 6$, a \emph{$k$--systolic complex} \cite{JS} is a simply connected
locally $k$--large simplicial complex. In \cite{O-cnpc} the notion of a \emph{weakly systolic} complex, being a generalization
of $k$--systolicity, has been introduced. 
The following immediate corollary of Theorem A provides a local condition on a weakly systolic complex implying
its hyperbolicity. The new condition is more general than the well studied conditions for hyperbolicity: 
local $7$--largeness for $k$--systolic complexes
(see \cite[Section 2]{JS}), and $SD_2^{\ast}(7)$ for weakly systolic complexes (see \cite[Section 7]{O-cnpc}).

\begin{corollary}
\label{c:67syst}
The following condition implies hyperbolicity of a weakly systolic complex $X$: For every vertex $v$ such that 
$X_v$ is not $6$--large (respectively, is $6$--large but not $7$--large), every its neighbor $w$ has an $8$--large (respectively, $7$--large) link $X_w$.
\end{corollary}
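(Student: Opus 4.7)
The approach is to apply Theorem~A directly. By the standard properties of weakly systolic complexes, $X$ is simply connected and locally $5$--large (cf.\ \cite{O-cnpc}), so it suffices to verify that $X$ is $8$--located. I will show the stronger fact that the hypothesis of the corollary rules out any dwheel in $X$ of boundary length at most $8$, so that $8$--location holds vacuously.

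Let $dW=(v_0,v_0',w;v_1,\ldots,v_{k-2};v_1',\ldots,v_{l-2}')$ be a hypothetical $(k,l)$--dwheel of boundary length at most $8$, with $k\geqslant l$. The key observation is that, by flagness of $X$, the full rim $(v_1,\ldots,v_{k-2},w,v_0')$ of the $k$--wheel at $v_0$ embeds as a full $k$--cycle inside $X_{v_0}$; symmetrically $X_{v_0'}$ contains a full $l$--cycle. Moreover $v_0\sim v_0'$, so the corollary's hypothesis applied at either vertex constrains the link at the other. Local $5$--largeness forces $k,l\geqslant 5$, and the boundary bound leaves only the pairs $(k,l)\in\{(5,5),(6,5),(7,5),(6,6)\}$.

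The remaining case analysis is short. If $l=5$, then $X_{v_0'}$ contains a full $5$--cycle and is therefore not $6$--large, so the hypothesis applied to $v_0'$ forces $X_{v_0}$ to be $8$--large, contradicting the full $k$--cycle in $X_{v_0}$ with $k\leqslant 7$. If $(k,l)=(6,6)$, I examine $X_{v_0}$: either it fails to be $6$--large, in which case the hypothesis forces $X_{v_0'}$ to be $8$--large despite its full $6$--cycle, or it is $6$--large and hence (containing a full $6$--cycle) not $7$--large, in which case the hypothesis forces $X_{v_0'}$ to be $7$--large despite its full $6$--cycle; both alternatives are contradictions. Thus no admissible dwheel exists, $X$ is vacuously $8$--located, and Theorem~A delivers hyperbolicity. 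The proof is essentially combinatorial bookkeeping and presents no serious obstacle; the only subtlety is the translation from "$k$--wheel at $v_0$" to "full $k$--cycle in $X_{v_0}$", which is immediate from flagness.
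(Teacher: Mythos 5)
Your proof is correct and supplies exactly the argument the paper leaves implicit when it calls this an ``immediate corollary'' of Theorem A: applying the hypothesis to the hub vertices $v_0$, $v_0'$ of any hypothetical dwheel, via the full $k$-- and $l$--cycles that their links must contain, is incompatible with $k,l\geqslant 5$ and the boundary bound $k+l\leqslant 12$, so no dwheel of boundary length at most $8$ can exist and $8$--location holds vacuously. The translation from a $k$--wheel at $v$ to a full $k$--cycle in $X_v$ (by flagness), the reduction to the pairs $(k,l)\in\{(5,5),(6,5),(7,5),(6,6)\}$, and the two-case use of the hypothesis are all handled correctly, so this is the same route as the paper.
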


Note however, that (weakly) systolic complexes do not fit well the world of manifolds of dimension above $2$ --- e.g.\ 
there are no $k$--systolic triangulations of such manifolds \cite{JS}. More strikingly, groups acting geometrically on 
systolic complexes are far from acting in the same way on aspherical manifolds --- see e.g.\ \cite{O-ciscg}.
Thus the $8$--location is a (asymptotically) far going generalization of $7$--systolicity, that allows still to obtain 
strong results on the structure of the complex --- as e.g.\ in the description of combinatorial balls provided
within the proof of Theorem~\ref{t:logl}.

\subsection{Beyond $8$--location}
\label{s:beyond}
In the current paper we focus on $8$--located complexes. Nevertheless, the $7$--location seems to be an interesting
nonpositive-curvature-like property, as argued below. It is not so with $6$--location --- there exist $6$--located
locally $5$--large triangulations of the $2$--sphere. 

Note that all $6$--systolic complexes are $7$--located. In particular, the $7$--location does not imply hyperbolicity:
The tiling of the Euclidean plane by equilateral triangles is $7$--located.
However, with much more effort than for the $8$--location, one can prove a local-to-global result
similar to Theorem~\ref{t:logl} also for $7$--located complexes.
We believe that there are other nonpositive-curvature-like properties satisfied by such complexes (cf.\ e.g.\ \cites{JS,O-cnpc}).
This, together with the fact that $7$--location goes far beyond systolicity, make us believe that this notion deserves further studies.


\begin{bibdiv}
\begin{biblist}

\bib{BCCGO}{article}{
   author={Bre{\v{s}}ar, B.},
   author={Chalopin, J.},
   author={Chepoi, V.},
   author={Gologranc, T.},
   author={Osajda, D.},
   title={Bucolic complexes},
   journal={Adv. Math.},
   volume={243},
   date={2013},
   pages={127--167},
   issn={0001-8708},
   review={\MR{3062742}},
   doi={10.1016/j.aim.2013.04.009},
}

\bib{EMcCM}{article}{
   author={Elder, M.},
   author={McCammond, J.},
   author={Meier, J.},
   title={Combinatorial conditions that imply word-hyperbolicity for
   3-manifolds},
   journal={Topology},
   volume={42},
   date={2003},
   number={6},
   pages={1241--1259},
   issn={0040-9383},
   review={\MR{1981356 (2004f:20078)}},
   doi={10.1016/S0040-9383(02)00100-3},
}

\bib{JS}{article}{
   author={Januszkiewicz, T.},
   author={{\'S}wi{\c{a}}tkowski, J.},
   title={Simplicial nonpositive curvature},
   journal={Publ. Math. Inst. Hautes \'Etudes Sci.},
   number={104},
   date={2006},
   pages={1--85},
   issn={0073-8301},
   review={\MR{2264834 (2007j:53044)}},
   doi={10.1007/s10240-006-0038-5},
}

\bib{O-ciscg}{article}{
   author={Osajda, D.},
   title={Connectedness at infinity of systolic complexes and groups},
   journal={Groups Geom. Dyn.},
   volume={1},
   date={2007},
   number={2},
   pages={183--203},
   issn={1661-7207},
   review={\MR{2319456 (2008e:20064)}},
   doi={10.4171/GGD/9},
}

\bib{O-cnpc}{article}{
    title     ={A combinatorial non-positive
                curvature I: weak systolicity},
    author    ={Osajda, D.},
    status    ={preprint},
    eprint    ={arXiv:1305.4661},
    date      ={2013}
}

\bib{Papa}{article}{
   author={Papasoglu, P.},
   title={Strongly geodesically automatic groups are hyperbolic},
   journal={Invent. Math.},
   volume={121},
   date={1995},
   number={2},
   pages={323--334},
   issn={0020-9910},
   review={\MR{1346209 (96h:20073)}},
   doi={10.1007/BF01884301},
}

\end{biblist}
\end{bibdiv}

\end{document}